\theoremstyle{plain}
\newtheorem{theorem}{Theorem}[section]
\newtheorem{lemma}[theorem]{Lemma}
\newtheorem{proposition}[theorem]{Proposition}
\newtheorem{corollary}[theorem]{Corollary}
\newtheorem{question}[theorem]{Question}
\theoremstyle{definition}
\newtheorem{definition}[theorem]{Definition}
\newtheorem{example}[theorem]{Example}
\theoremstyle{remark}
\newtheorem{remark}[theorem]{Remark}
\newcommand\underrel[3][]{\mathrel{\mathop{#3}\limits_{%
      \ifx c#1\relax\mathclap{#2}\else#2\fi}}}
\newcommand{\N}{\mathbb{N}}
\newcommand{\Z}{\mathbb{Z}}
\DeclareMathOperator{\Aut}{Aut}
\DeclareMathOperator{\Hol}{Hol}
\newcommand{\pr}{\pi}
\DeclareMathOperator{\Ab}{Ab}
\newcommand{\id}{\mathrm{id}}
\DeclareMathOperator{\Triv}{Triv}
\DeclareMathOperator{\opTriv}{opTriv}
\newcommand{\op}{\mathrm{op}}
\begin{document}
\title{On Two-sided Skew Braces}

\author{S.~Trappeniers}

\address{Department of Mathematics and Data Science, 
 Vrije Universiteit Brussel, 
 Pleinlaan 2, 
 1050 Brussels, Belgium} 

\email{senne.trappeniers@vub.be} 

\subjclass[2020] {Primary 16T25, 20N99}

\keywords{Skew brace, Two-sided skew brace, Jacobson radical ring}

\begin{abstract}
    In order to study two-sided skew braces, we introduce the notion of weakly trivial skew braces. We give a classification of such skew braces and show that every two-sided skew brace is an extension of a weakly trivial skew brace by a two-sided brace. As a result, we obtain new and generalize known results relating the additive and multiplicative group of two-sided skew braces. Further, we show that two a priori different notions of prime and semiprime skew braces, as introduced by Konovalov, Smoktunowicz and Vendramin, coincide for two-sided skew braces.
\end{abstract}

\maketitle

\section{Introduction}
Left braces were introduced by Rump \cite{Rum07} as a generalization of Jacobson radical rings after he found that the latter give rise to solutions of the set-theoretic Yang--Baxter equation. This equation, proposed by Drinfel'd \cite{Dri92}, is a combinatorial version of the Yang--Baxter equation originating in the works of Yang and Baxter in statistical mechanics \cite{Bax72,Yan67}. Left braces were generalized to skew left braces by Guarnieri and Vendramin in \cite{GV17}. Since then, connections with exact factorizations of groups and Hopf--Galois extensions have been described \cite{SV18,ST22b}. As racks and quandles yield solutions of the set-theoretic Yang--Baxter equation, skew braces also have applications in knot theory \cite{Bac18}.

Jacobson radical rings are in bijective correspondence with two-sided braces and have been extensively studied; see for example \cite{ADS98,AS02,Sys12, Wat68}. The notion of two-sidedness equally makes sense for skew braces. So far, the only paper focusing on two-sided skew braces is due to Nasybullov \cite{Nas19}. Some of his ideas are pursued further in the present paper, in order to obtain that every two-sided skew brace is an extension of a weakly trivial skew brace by a two-sided brace. Weakly trivial skew braces are a new notion generalizing trivial and almost trivial skew braces and form the main topic of \cref{section: weakly trivial skew braces}. 
In \cref{section: twosided skew braces}, we then obtain our main results. For example, we show that aside from trivial and almost trivial skew braces on simple groups, the only simple two-sided skew braces are infinite two-sided braces. Also, we generalize some results of Jacobson radical rings to two-sided skew braces and improve some of the results obtained in \cite{Nas19} on the connection between the additive and multiplicative groups of two-sided skew braces. In \cite{KSV21} and its appendix, the authors introduced (strongly) prime and  (strongly) semiprime skew braces. For two-sided braces, both variations coincide with the usual notion for rings. It is then asked by the authors whether these variations always coincide. In \cref{section: prime and semiprime two-sided skew braces} we give an affirmative answer to this question in the case of two-sided skew braces.
\section{Preliminaries}
A \emph{skew (left) brace} is a triple $(A,+,\circ)$ where $A$ is a set and $+$ and $\circ$ are binary operations such that $(A,+)$ and $(A,\circ)$ are groups and the equality 
\begin{equation}
    a\circ(b+c)=a\circ b-a+a\circ c,\label{eq: left brace}
\end{equation}
holds for all $a,b,c\in A$. Here, $-a$ denotes the inverse of $a$ in the group $(A,+)$ and $\circ$ has precedence over $+$. The inverse of an element $a\in A$ with respect to the group $(A,\circ)$ is denoted by $\overline{a}$. The group $(A,+)$ is called the \emph{additive group} of A and $(A,\circ)$ is the \emph{multiplicative group} of $A$. It follows directly from \eqref{eq: left brace} that the identity elements of $(A,+)$ and $(A,\circ)$ coincide, this element is denoted by $0$. For skew braces $A$ and $B$, a map $f:A\to B$ is a \emph{skew brace homomorphism} if $f(a+b)=f(a)+f(b)$ and $f(a\circ b)=f(a)\circ f(b)$ for all $a,b\in A$. By $\Aut(A,+,\circ)$ we denote the group of \emph{skew brace automorphisms} of $A$, that is the bijective skew braces homomorphisms from $A$ to itself. A skew brace with an abelian additive group is called a \emph{brace}. If a skew brace $A$ has a nilpotent, respectively soluble, additive group, we say that $A$ is of \emph{nilpotent}, respectively \emph{soluble}, \emph{type}. A skew left brace is a \emph{two-sided skew brace} if for all $a,b,c\in A$ it holds that
\begin{equation*}
    (a+b)\circ c=a\circ c-c+b\circ c.
\end{equation*}
As an example, any group $G$ yields a two-sided skew brace by setting $g+h=g\circ h=gh$ for $g,h\in G$. We call this the \emph{trivial skew brace} on $G$ and denote it by $\Triv(G)$. 

For any $a\in A$, the map 
\begin{equation*}
    \lambda_a:A\to A,\quad b\mapsto -a+a\circ b,
\end{equation*}
is an automorphism of $(A,+)$ and moreover we obtain a group homomorphism 
\begin{equation*}
    \lambda:(A,\circ)\to \Aut(A,+),\quad a\mapsto \lambda_a,
\end{equation*}
which is called the \emph{$\lambda$-function} of $A$ \cite{GV17}.
A subset $I$ of $A$ is a \emph{left ideal} if it is a subgroup of $(A,+)$ and $\lambda_a(I)\subseteq I$ for all $a\in A$.
An \emph{ideal} $I$ of $A$ is a left ideal such that moreover $I$ is a normal subgroup of $(A,+)$ and $(A,\circ)$. For an ideal $I$, the \emph{quotient skew brace} $A/I$ can be defined in the natural way.

For any $a,b\in A$, we define $a*b=\lambda_a(b)-b=-a+a\circ b-b$. Intuitively, $*$ can be thought of as a measure of the difference between the operations $+$ and $\circ$. For $X,Y\subseteq A$, we define $X*Y$ as the additive subgroup generated by $\{x*y\mid x\in X, y\in Y\}$. For example, $A*A$ is an ideal of $A$ and it is the minimal one such that $A/(A*A)$ is a trivial skew brace; see \cite[Proposition 2.3]{CSV19}.

Following \cite{BCJO19,Rum07,Smo18}, we introduce four descending series of subskew braces. We set $A^{(1)}=A^1=A^{[1]}=A_1=A$. The right series of a skew brace $A$ is defined as $A^{(n+1)}=A^{(n)}*A$, where $n\geq 1$. $A$ is said to be \emph{right nilpotent} if $A^{(n)}=\{0\}$ for some $n$. Similarly, the left series of a skew brace $A$ is given by $A^{n+1}=A*A^n$, where $n\geq 1$. $A$ is said to be \emph{left nilpotent} if $A^{n }=\{0\}$ for some $n$. We can also inductively define $A^{[n+1]}$ as the additive subgroup generated by $\bigcup_{1\leq i\leq n}A^{[i]}*A^{[n+1-i]}$, where $n\geq 1$. We say that $A$ is \emph{strongly nilpotent} if $A^{[n]}=\{0\}$ for some $n$. At last, the derived series of a skew brace $A$ is defined inductively by $A_{n+1}=A_n*A_n$, where $n\geq 1$. A skew brace is \emph{soluble} if $A_n=\{0\}$ for some $n$. The relation between left, right and strongly nilpotency is given by the following result, which is \cite[Theorem 2.30]{CSV19}.
\begin{theorem}\label{theorem: left right strong nilpotency}
A skew left brace is strongly nilpotent if and only if it is both left and right nilpotent.
\end{theorem}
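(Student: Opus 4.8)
The plan is to prove the two implications separately: the forward one is a routine induction, while the reverse one carries essentially all the difficulty.

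For the forward direction, assume $A$ is strongly nilpotent. I would show by induction on $n$ that $A^{n}\subseteq A^{[n]}$ and $A^{(n)}\subseteq A^{[n]}$. All four series agree at $n=1$, and if $A^{(n)}\subseteq A^{[n]}$ then $A^{(n+1)}=A^{(n)}*A\subseteq A^{[n]}*A^{[1]}$, which is exactly the $i=n$ summand in the additive generating set of $A^{[n+1]}$; symmetrically $A^{n+1}=A*A^{n}\subseteq A^{[1]}*A^{[n]}\subseteq A^{[n+1]}$ via the $i=1$ summand. Hence $A^{[N]}=\{0\}$ forces $A^{(N)}=A^{N}=\{0\}$, so $A$ is both left and right nilpotent.

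For the reverse direction, assume $A^{m}=\{0\}$ and $A^{(m)}=\{0\}$ and seek $N$ with $A^{[N]}=\{0\}$. The guiding idea is that, modulo products of strictly larger degree, the operation $*$ should behave associatively and biadditively, so that any bracketing of a degree-$n$ product collapses to the left-normed one (governed by $A^{(n)}$) and, symmetrically, to the right-normed one (governed by $A^{n}$). Concretely, I would first record the identity $(a\circ b)*c=a*(b*c)+b*c+a*c$ together with the second-variable rule $a*(b+c)=a*b+b+a*c-b$; combining these with $a+b=a\circ\lambda_{\overline a}(b)$ yields a first-variable rule $(a+b)*c=a*(\lambda_{\overline a}(b)*c)+\lambda_{\overline a}(b)*c+a*c$. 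The point of these formulas is that each correction term is either of strictly higher degree or differs from an expected term only through an application of some $\lambda_{x}$ or a conjugation in $(A,+)$. Since every member of these series is an ideal, such $\lambda$-images and additive conjugates remain inside the same member, so the rewriting never leaves the subsets I wish to bound.

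Using this, I would prove by induction on the degree that every bracketed degree-$n$ product agrees, modulo products of higher degree, with a left-normed product of the same degree, so that $A^{[n]}$ lands in $A^{(n)}$ plus a remainder built from higher-degree products; the mirror reduction lands it instead in $A^{n}$ plus a remainder. Feeding $A^{(m)}=\{0\}$ kills the left-normed part once $n\ge m$, while $A^{m}=\{0\}$ terminates the higher-degree remainder through the symmetric reduction, so that a suitable $N$ (quadratic in $m$) gives $A^{[N]}=\{0\}$. The main obstacle is precisely this final induction: because $*$ is neither associative nor biadditive on the nose and $(A,+)$ need not be abelian, each reduction step spawns several conjugation- and $\lambda$-twisted correction terms, and one must choose an induction measure (for instance a lexicographic pair recording total degree together with nesting depth) for which all of these are genuinely simpler, and then verify that the two nilpotency hypotheses simultaneously control both the normed part and the higher-degree remainder.
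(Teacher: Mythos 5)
Your forward implication is correct and complete: the inductive inclusions $A^{(n)}\subseteq A^{[n]}$ and $A^{n}\subseteq A^{[n]}$ (via the $i=n$ and $i=1$ summands of the generating set of $A^{[n+1]}$) are exactly how that half is done. Be aware, though, that the paper itself contains no proof of this theorem: it imports it from \cite[Theorem~2.30]{Cedo2019} (going back to Smoktunowicz in the brace case), so the benchmark is that proof, which indeed starts from the same two identities of \cref{lem: distributivity skew} that you quote; your derived first-variable rule $(a+b)*c=a*(\lambda_{\overline{a}}(b)*c)+\lambda_{\overline{a}}(b)*c+a*c$, obtained from $a+b=a\circ\lambda_{\overline{a}}(b)$, is also a correct identity. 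So the raw materials are right.

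The converse, however, is not a proof but a plan, and two of its load-bearing claims fail as stated. First, it is not true that ``every member of these series is an ideal'': the terms $A^{n}$ of the left series are in general only \emph{left} ideals --- stable under every $\lambda_a$, but not necessarily normal in $(A,+)$ --- while your rewriting rules generate additive conjugates at every step (e.g.\ $a*(b+c)=a*b+(b+a*c-b)$), so the assertion that the rewriting ``never leaves the subsets I wish to bound'' is precisely where the argument breaks; even the stability of $A^{[n]}$ under $\lambda$-maps and additive conjugation requires an argument you have not supplied. Second, the guiding heuristic that $*$ is associative and biadditive ``modulo products of strictly larger degree'' has the degree bookkeeping backwards: the identity $a*(b*c)=(a\circ b)*c-a*c-b*c$ trades a depth-$3$ product for depth-$2$ products, one of whose letters is the $\circ$-composite $a\circ b$, so rebracketing \emph{lowers} the visible $*$-degree while hiding complexity inside the letters, and there is no evident filtration in which every bracketing collapses to the left-normed product modulo higher degree. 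You candidly defer the concluding induction --- the choice of measure and the control of all $\lambda$- and conjugation-twisted correction terms --- but that induction is the entire content of the theorem; the cited proof does not establish any generic rebracketing principle, instead using the same identities to prove targeted inclusions between specific terms of the three series (forcing $A^{[N]}$ into a deep term of one series once $N$ is large relative to the nilpotency index governing the other), proving the needed $\lambda$-stability and normality facts along the way. As it stands, the hard half of your proposal is an unverified sketch resting on a false supporting claim.
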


Given any skew brace $A$, we can replace the additive group by its opposite group and obtain a new skew brace $A_\op=(A,+_\op, \circ)$, called the \emph{opposite skew brace} of $A$ \cite{KT20}. The $\lambda$-map of $A_\op$ associated with an element $a$ is denoted $\lambda^\op_a$. Concretely, $\lambda^\op_a(b)=a\circ b-a=a+\lambda_a(b)-a$. As a consequence, ideals of $A$ and $A_\op$ coincide. The operation $*$ associated to $A_\op$ is denoted by $*_\op$, meaning that $a*_\op b=-b+a\circ b-a$. Note that $A*_\op A$, or shortly $A^2_\op$, is an ideal of $A_\op$ and therefore also of $A$. For a group $G$, the opposite skew brace of $\Triv(G)$ is called the \emph{almost trivial skew brace} on $G$ and we denote it by $\opTriv(G)$. Concretely $g\circ h=g+h=gh$ for $g,h\in \opTriv(G)$. Therefore $g*h=h^{-1}ghg^{-1}$ in $\opTriv(G)$ and thus $\opTriv(G)^2$ coincides with the derived subgroup of $G$. The ideals of $\opTriv(G)$ are easily seen to be the normal subgroups of $G$. Also, left, right and strong nilpotency coincide for $\opTriv(G)$ with nilpotency of the group $G$ in the classical sense. Similarly, solubility of the skew brace $\opTriv(G)$ and the group $G$ coincide.

Given skew braces $A$ and $B$, their \emph{direct product} $A\times B$ is a skew brace with underlying set $A\times B$ and for all $a,a'\in A$, $b,b'\in B$,
\begin{align*}
    (a,b)+(a',b')=(a+a',b+b');\quad
    (a,b)\circ(a',b')=(a\circ a',b\circ b').
\end{align*}

Recall that for any ring $R$ the operation $a\circ b=a+b+ab$ makes $(R,\circ)$ into a monoid. If $R$ is a group, then $R$ is called a \emph{Jacobson radical ring}. Equivalently, these are the rings that coincide with their own Jacobson radical. If $R$ is a Jacobson radical ring then $(R,+,\circ)$ is a two-sided brace and moreover every two-sided brace $(A,+,\circ)$ yields a Jacobson radical ring where the ring multiplication is given by $*$ \cite{Rum07}. This follows partially from the following straightforward lemma. 
\begin{lemma}\label{lem: distributivity skew}
Let $A$ be a skew left brace. Then for all $a,b,c\in A$, the following equalities hold:
\begin{align*}
    a*(b+c)&=a*b+b+a*c-b,\\
    (a\circ b)*c&=a*(b*c)+b*c+a*c.
\end{align*}
For a two-sided brace, also the following equalities hold:
\begin{align*}
    (a+b)*c&=-b+a*c+b+b*c,\\
    a*(b\circ c)&=a*c+b*c+(a*b)*c.
\end{align*}
\end{lemma}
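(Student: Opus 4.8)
The plan is to derive all four identities from the defining axioms together with the two elementary rewriting rules $a\circ b = a + (a*b) + b$ and $\lambda_a(b) = (a*b) + b$, both immediate from the definition of $*$. No genuinely new idea is needed: the content is bookkeeping inside the \emph{nonabelian} group $(A,+)$, so the only real care required is to respect the order of summands and to expand additive inverses as $-(x+y) = -y - x$.

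For the first identity I would substitute the left-brace axiom \eqref{eq: left brace} into $a*(b+c) = -a + a\circ(b+c) - (b+c)$ and rewrite $-(b+c)$ as $-c - b$; the right-hand side $a*b + b + a*c - b$ expands to the same word after the evident cancellation. The second identity is cleanest through the $\lambda$-function: since $\lambda$ is a group homomorphism $(A,\circ)\to\Aut(A,+)$ one has $\lambda_{a\circ b} = \lambda_a\circ\lambda_b$, so $(a\circ b)*c = \lambda_a(\lambda_b(c)) - c$. Writing $\lambda_b(c) = (b*c) + c$ and using additivity of $\lambda_a$ gives $\lambda_a(\lambda_b(c)) = \lambda_a(b*c) + \lambda_a(c) = (a*(b*c) + (b*c)) + (a*c + c)$, and subtracting $c$ yields $a*(b*c) + b*c + a*c$.

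For the two right-brace identities I would first set up the mirror of the $\lambda$-function. Define $\mu_c\colon A\to A$ by $\mu_c(a) = a\circ c - c$. The right-brace axiom says precisely that each $\mu_c$ is additive, and a short check shows it is bijective, so $\mu_c\in\Aut(A,+)$; moreover $c\mapsto\mu_c$ is an \emph{anti}-homomorphism $(A,\circ)\to\Aut(A,+)$, that is $\mu_{b\circ c} = \mu_c\circ\mu_b$ (this uses the right-brace axiom together with $(-b)\circ c = c - b\circ c + c$). The analogue of the rewriting rule is $\mu_c(a) = a + (a*c)$. The third identity is then the exact mirror of the first: substitute the right-brace axiom into $(a+b)*c = -(a+b) + (a+b)\circ c - c$ and cancel. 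For the fourth identity I would compute $a*(b\circ c) = -a + \mu_{b\circ c}(a) = -a + \mu_c(\mu_b(a))$, expand $\mu_b(a) = a + (a*b)$, apply $\mu_c$ additively as $\mu_c(a) + \mu_c(a*b) = (a + a*c) + ((a*b) + (a*b)*c)$, and cancel the leading $-a + a$. The computation then collapses to the exact dual of the second identity, namely $a*(b\circ c) = a*c + a*b + (a*b)*c$.

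The only obstacle anywhere is the noncommutativity of $+$: every cancellation must be performed on adjacent inverse pairs, and the anti-homomorphism property of $\mu$ must be verified by hand rather than quoted. Once the rewriting rules and the map $\mu$ are in place, each of the four verifications is just a finite sequence of such cancellations, with the fourth identity the most delicate to track.
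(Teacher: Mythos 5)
Your method is the right one, and since the paper offers no proof of this lemma at all (it is introduced as a ``straightforward lemma'' and left unproved), there is nothing to compare at the level of technique: the first identity by substituting \eqref{eq: left brace} into $a*(b+c)=-a+a\circ(b+c)-c-b$, the second via $\lambda_{a\circ b}=\lambda_a\lambda_b$ and the rewriting rule $\lambda_b(c)=b*c+c$, and the third as the mirror of the first are all correct. Your map $\mu_c(a)=a\circ c-c$ is exactly the right mirror of $\lambda$; note that additivity of $\mu_c$ alone already yields the anti-homomorphism property, since $\mu_c(\mu_b(a))=\mu_c(a\circ b)-\mu_c(b)=(a\circ b\circ c-c)+(c-b\circ c)=\mu_{b\circ c}(a)$, so your parenthetical appeal to $(-b)\circ c=c-b\circ c+c$ is more than is needed.

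The one substantive issue is your fourth computation: you correctly derive $a*(b\circ c)=a*c+a*b+(a*b)*c$, but this is \emph{not} the identity printed in the lemma, whose middle term is $b*c$, and you pass over the discrepancy in silence. You should have flagged it explicitly, because the printed identity is false and your version is the correct one: setting $c=0$ in the printed formula gives $a*b=a*0+b*0+(a*b)*0=0$ for all $a,b$, so it could only hold in skew braces with trivial $*$; concretely, in a two-sided brace, i.e.\ a Jacobson radical ring, the left-hand side is $a(b+c+bc)=ab+ac+abc$, which agrees with your formula but not with the printed one, and in $\opTriv(S_3)$ one checks the failure directly. Your formula is also the exact image, under passing to the opposite multiplication, of the left-brace identity $(a\circ b)*c=a*(b*c)+b*c+a*c$, which is further confirmation. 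The misprint is harmless to the rest of the paper: the later applications (\cref{lem: distr center}, \cref{lem: associativity *}, \cref{lem: IJ in A^2cap A^2op then I*J ideal}) invoke only the first three identities, never the fourth. So: your proof is correct, but a complete referee-quality write-up would state the corrected identity and note that the lemma as printed needs the middle term $b*c$ replaced by $a*b$.
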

Under the above correspondence, ideals of two-sided braces and Jacobson radical rings coincide. As ring multiplication is associative, the three different types of nilpotency coincide for two-sided braces and correspond with the classical notion for rings. The following result is proved for left braces in \cite[Remark 5.2]{CJO20} and one implication is proved for skew left braces in \cite[Lemma 4.1]{Nas19}. The proof of the other implication is the same as for left braces.
\begin{proposition}\label{prop: twosided iff mult conj additive automorphism}
A skew brace $A$ is two-sided if and only if all inner automorphisms of $(A,\circ)$ are skew brace automorphisms of $A$.
\end{proposition}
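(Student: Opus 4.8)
The plan is to reduce everything to the additivity of conjugation. For $a\in A$ write $\kappa_a$ for the inner automorphism $x\mapsto a\circ x\circ\overline a$ of $(A,\circ)$. By construction $\kappa_a$ respects $\circ$, so it is a skew brace automorphism if and only if it is additive. Hence the proposition is equivalent to the assertion that $A$ is two-sided if and only if
\begin{equation*}
a\circ(b+c)\circ\overline a = a\circ b\circ\overline a + a\circ c\circ\overline a \qquad\text{for all } a,b,c\in A.\tag{$*$}
\end{equation*}
I would prove this as a chain of equivalences, which delivers both implications at once (in agreement with the remark that the two directions require the same computation as in the brace case).

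First I would expand the left-hand side of $(*)$ by the left brace identity $a\circ(b+c)=a\circ b-a+a\circ c$, which always holds, so that $(*)$ becomes $(a\circ b-a+a\circ c)\circ\overline a = a\circ b\circ\overline a + a\circ c\circ\overline a$. For fixed $a$ the elements $p:=a\circ b$ and $q:=a\circ c$ range over all of $A$; setting also $r:=\overline a$ (so that $a=\overline r$ and $-a=-\overline r$), condition $(*)$ is seen to be equivalent to
\begin{equation*}
(p-\overline r+q)\circ r = p\circ r + q\circ r \qquad\text{for all } p,q,r\in A.\tag{$**$}
\end{equation*}

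It then remains to identify $(**)$ with the right brace axiom $(p+q)\circ r=p\circ r-r+q\circ r$. For $(**)\Rightarrow$ right brace I would perform the bijective substitution $q=\overline r+s$, which turns $(**)$ into $(p+s)\circ r=p\circ r+(\overline r+s)\circ r$; specializing $p=0$ and using $0\circ r=r$ yields $(\overline r+s)\circ r=-r+s\circ r$, and substituting this back gives precisely $(p+s)\circ r=p\circ r-r+s\circ r$. Conversely, assuming the right brace axiom, one expands $(p-\overline r+q)\circ r$ term by term and uses the identity $(-\overline r)\circ r=r+r$, which follows from applying the axiom to $(-\overline r+\overline r)\circ r=0\circ r=r$ together with $\overline r\circ r=0$; the middle contributions then cancel and $(**)$ drops out.

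The one point requiring genuine care is that $(A,+)$ need not be abelian, so throughout one must respect the order of the summands in the three-term expression $a\circ b-a+a\circ c$. The heart of the argument is that the stray middle term $-a$, equivalently $-\overline r$ after the reparametrization, is exactly what becomes the term $-r$ in the right brace axiom; this is what makes additivity of the inner automorphisms the same condition as right distributivity. Since each step above is reversible, one obtains $(*)\iff(**)\iff$ right brace axiom, completing the proof.
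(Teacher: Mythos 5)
Your proof is correct, and I verified each step: the reduction to the additivity condition $(*)$ is legitimate because $\kappa_a$ is automatically a bijective $\circ$-endomorphism (with inverse $\kappa_{\overline{a}}$); the passage from $(*)$ to $(**)$ is a genuine equivalence, since for fixed $a$ the maps $b\mapsto a\circ b$, $c\mapsto a\circ c$ are bijections, $a\mapsto \overline{a}$ is a bijection, and only the universally valid left brace identity is invoked; the substitution $q=\overline{r}+s$ is bijective for fixed $r$, so the derived identity $(p+s)\circ r=p\circ r+(\overline{r}+s)\circ r$ holds for all triples, and your specialization $p=0$ plus back-substitution recovers the right brace axiom; in the converse direction the auxiliary computation $(-\overline{r})\circ r=r+r$, obtained from $(-\overline{r}+\overline{r})\circ r=0\circ r=r$ and $\overline{r}\circ r=0$, is exactly right, and you are properly careful never to commute summands in the possibly non-abelian group $(A,+)$. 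Note, however, that the paper does not actually write out a proof of this proposition: it cites \cite{Cedo2020} for the left brace case and \cite{Nasybullov2019} for the implication from two-sidedness to additivity of inner automorphisms, remarking that the remaining implication goes as for left braces; those sources treat the two implications as separate direct computations (in the spirit of expansions like those in \cref{lem: distributivity skew}). Your argument is therefore a genuinely different, and arguably cleaner, route: a single chain of reversible equivalences, $(*)\iff(**)\iff$ right brace axiom, which proves both directions simultaneously, is fully self-contained, and makes transparent the conceptual point that the middle term $-a$ produced by the left brace expansion of $a\circ(b+c)$ is precisely the term $-r$ appearing in the right brace axiom after the reparametrization $r=\overline{a}$. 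What the paper's approach buys is brevity by deferral; what yours buys is a uniform proof that would let the paper stand on its own.
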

A useful consequence is the following result, found in the proof of \cite[Corollary 4.2]{Nas19}.
\begin{corollary}\label{cor: char is ideal}
Let $A$ be a two-sided skew brace and $I$ a characteristic subgroup of $(A,+)$, then $I$ is an ideal of $A$. 
\end{corollary}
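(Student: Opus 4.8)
The plan is to verify directly the four requirements in the definition of an ideal, and to isolate the single point where two-sidedness is genuinely needed. Recall that $I$ is an ideal precisely when it is a subgroup of $(A,+)$, satisfies $\lambda_a(I)\subseteq I$ for all $a\in A$, and is normal in both $(A,+)$ and $(A,\circ)$. Three of these will follow essentially for free from the hypothesis, since a characteristic subgroup of $(A,+)$ is by definition invariant under every element of $\Aut(A,+)$.

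First I would dispose of the additive conditions. Being characteristic, $I$ is in particular a normal subgroup of $(A,+)$, which gives the subgroup and additive-normality requirements at once. For the left-ideal condition, I would invoke the fact, recorded in the preliminaries, that each $\lambda_a$ is an automorphism of $(A,+)$. Since $I$ is characteristic we then get $\lambda_a(I)=I\subseteq I$ for every $a\in A$, so $I$ is a left ideal of $A$.

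The remaining and only substantial point is normality of $I$ in the multiplicative group $(A,\circ)$, and this is exactly where the two-sided hypothesis enters. I would apply \cref{prop: twosided iff mult conj additive automorphism}: because $A$ is two-sided, every inner automorphism of $(A,\circ)$, that is every map of the form $c\mapsto a\circ c\circ\overline{a}$, is a skew brace automorphism of $A$ and hence in particular an automorphism of $(A,+)$. Since $I$ is characteristic in $(A,+)$, each such map sends $I$ onto itself, so $a\circ I\circ\overline{a}=I$ for all $a\in A$, which is precisely normality of $I$ in $(A,\circ)$. Combining this with the previous paragraph shows that $I$ is an ideal.

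I do not anticipate a technical obstacle here, as every step reduces to applying a stated result; the conceptual crux is simply the observation that multiplicative conjugation, which a priori has nothing to do with the additive group, is forced to act as an additive automorphism exactly because of two-sidedness, and it is this interaction that a characteristic additive subgroup is able to exploit.
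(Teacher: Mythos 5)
Your proof is correct and is essentially the paper's intended argument: the paper states the corollary as a direct consequence of \cref{prop: twosided iff mult conj additive automorphism} (citing Nasybullov without spelling out details), and your route --- characteristic implies additive normality and $\lambda_a$-invariance for free, while two-sidedness forces multiplicative conjugation to be an additive automorphism and hence to preserve $I$ --- is exactly that consequence made explicit. No gaps.
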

At last, for a group $G$, its abelianization is denoted by $\Ab(G)$ and its center by $Z(G)$. The usual notation $[\cdot,\cdot]$ is used for the commutator.

\section{Weakly trivial skew braces} \label{section: weakly trivial skew braces}
In this section we introduce the new notion of weakly trivial skew braces. Other than being a generalization of both trivial and almost trivial skew braces, the real motivation for this definition will become clear in \cref{section: twosided skew braces}. 
\begin{definition}
A skew left brace $A$ is \emph{weakly trivial} if $A^2\cap A_\op^2=\{0\}$.
\end{definition}
\begin{example}
All trivial and almost trivial skew braces are weakly trivial.
\end{example}
\begin{definition}
Let $G$ and $H$ be groups, then a \emph{subdirect product} of $G$ and $H$ is a subgroup $F$ of $G\times H$ such that $\pi_G(F)=G$ and $\pi_H(F)=H$, where $\pi_G$, respectively $\pi_H$, is the canonical projection of $G\times H$ onto $G$, respectively $H$. A \emph{subdirect product of skew braces} is defined analogously.
\end{definition}
\begin{proposition}\label{prop: weakly is subdirect product}
A skew brace $A$ is weakly trivial if and only if it embeds into a product of a trivial and almost trivial skew brace. In particular, it is a subdirect product of $A/A^2$ and $A/A_\op^2$.
\end{proposition}
\begin{proof}
It is clear that subskew braces and direct products of weakly trivial skew braces are once again weakly trivial, hence one implication follows. For the converse implication, consider the projections $\pr_1:A\to A/A^2$ and $\pr_2:A\to A/A_\op ^2$. The skew brace homomorphism 
\[\iota:A\to A/A^2\times A/A_\op^2:a\mapsto (\pr_1(a),\pr_2(a)),\]
has kernel $A^2\cap A^2_\op=\{0\}$. As $\iota$ clearly is a subdirect product, $A/A^2$ is trivial and $A/A^2_\op$ is almost trivial, this concludes the proof.
\end{proof}
\begin{remark}
    A weakly trivial skew brace $A$ might be constructed as a subdirect product of a trivial and almost trivial skew brace in multiple ways. Take for example two non-abelian groups $G$ and $H$ and consider the direct, hence subdirect, product $A=\Triv(G)\times \opTriv(H)$. It is easily seen that $A/A^2\cong \Triv(G)\times \Triv(\Ab(H))$ and $A/A_\op^2\cong \Triv(\Ab(G))\times \opTriv(H)$, hence $A$ is a subdirect product of  $(\Triv(G)\times \Triv(\Ab(H)))\times (\Triv(\Ab(G))\times \opTriv(H))$. Although one might argue that it is more desirable to write $A$ as a direct product when possible, the embedding into $A/A^2\times A/A_\op^2$ provides a canonical choice which will prove to be useful in the classification in \cref{theorem: classification weakly trivial}.
\end{remark}
\begin{proposition}\label{prop: A/A^2 cap A^2op is weakly trivial}
Let $A$ be a skew brace, then $A/(A^2\cap A^2_\op)$ is a weakly trivial skew brace.
\end{proposition}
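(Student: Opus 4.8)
The plan is to pass everything through the canonical projection and reduce the statement to an elementary fact about images of subgroups under a group quotient. Write $J=A^2\cap A^2_\op$. Since $A^2=A*A$ is an ideal of $A$ and $A^2_\op$ is an ideal of $A_\op$ (hence of $A$), their intersection $J$ is an ideal, so the quotient $A/J$ is a well-defined skew brace; let $\pi\colon A\to A/J$ denote the canonical projection.

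First I would check that $\pi$ carries the two relevant squares onto the corresponding squares of the quotient, i.e.\ $\pi(A^2)=(A/J)^2$ and $\pi(A^2_\op)=(A/J)^2_\op$. This is immediate from the fact that $\pi$ is a surjective skew brace homomorphism: any skew brace homomorphism $f$ satisfies $f(a*b)=f(a)*f(b)$, so the image of the additive subgroup generated by the elements $a*b$ is the additive subgroup generated by the elements $f(a)*f(b)$; surjectivity then gives $\pi(A*A)=(A/J)*(A/J)$. The same argument, applied to $\pi$ viewed as a homomorphism of opposite skew braces (it preserves $+_\op$ and $\circ$, hence $*_\op$) and using the identification $(A/J)_\op=A_\op/J$, yields the second equality. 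Consequently the desired conclusion $(A/J)^2\cap(A/J)^2_\op=\{0\}$ is equivalent to $\pi(A^2)\cap\pi(A^2_\op)=\{0\}$.

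The crux is then the following purely group-theoretic observation, applied to the group $G=(A,+)$, the normal subgroup $N=J$, and the subgroups $H=A^2$, $K=A^2_\op$: if $N=H\cap K$, then $\pi(H)\cap\pi(K)=\{0\}$ in $G/N$. Indeed, if $\pi(h)=\pi(k)$ with $h\in H$ and $k\in K$, then $-k+h\in N=H\cap K\subseteq K$, whence $h=k+(-k+h)\in K$ and therefore $h\in H\cap K=N$, forcing the common image to be $0$. The only input beyond the definitions is that $J$ is contained in both $A^2$ and $A^2_\op$, which holds by construction of the intersection.

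I expect the main (and essentially only) point needing care to be the bookkeeping of the second paragraph: verifying that $\pi$ respects both $*$ and $*_\op$, that the image of a generated additive subgroup is the additive subgroup generated by the images, and that $(A/J)_\op=A_\op/J$ lets me treat $(A/J)^2_\op$ as $\pi(A^2_\op)$. Once these identifications are in place, the group-theoretic observation closes the argument with no further computation.
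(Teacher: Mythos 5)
Your proof is correct, but it takes a genuinely different route from the paper. The paper's proof is structural and very short: it observes that $A/(A^2\cap A^2_\op)$ embeds naturally into $A/A^2\times A/A^2_\op$, a product of a trivial and an almost trivial skew brace, and then invokes \cref{lem: weakly trivial closed under products and subskew brace} (weak triviality passes to direct products and subskew braces). You instead verify the defining condition directly in the quotient: you show that a surjective skew brace homomorphism preserves $*$ (and, via the identification $(A/J)_\op=A_\op/J$, also $*_\op$), so that $(A/J)^2=\pi(A^2)$ and $(A/J)^2_\op=\pi(A^2_\op)$, and then close the argument with the elementary group-theoretic fact that if $N=H\cap K$ is normal in $G$ then the images of $H$ and $K$ in $G/N$ intersect trivially --- a fact whose verification you carry out correctly in the (possibly nonabelian) additive group. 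All the ingredients you rely on are sound: $J=A^2\cap A^2_\op$ is an ideal since $A^2$ is an ideal of $A$ and $A^2_\op$ is an ideal of $A_\op$ hence of $A$ (ideals of $A$ and $A_\op$ coincide), and the image of a generated additive subgroup is the subgroup generated by the images. What the paper's approach buys is reusable machinery: the embedding into $A/A^2\times A/A^2_\op$ is the subdirect-product viewpoint that drives the rest of \cref{section: weakly trivial skew braces} (\cref{prop: weakly is subdirect product}, \cref{lem: A fits in pullback diagram}). What your approach buys is self-containedness and slightly more information: the explicit identities $(A/J)^2=\pi(A^2)$ and $(A/J)^2_\op=\pi(A^2_\op)$, which the paper only establishes later, in a more special setting, inside the proof of \cref{lem: A fits in pullback diagram}.
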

\begin{proof}
Note that we have a natural embedding $\iota:A/(A^2\cap A^2_\op)\to A/A^2\times A/A^2_\op$. The statement then follows from \cref{prop: weakly is subdirect product}.
\end{proof}
\begin{corollary}
    Every weakly trivial skew brace is two-sided.
\end{corollary}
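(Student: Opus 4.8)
The plan is to lean entirely on the structural description in \cref{prop: weakly is subdirect product}, which embeds any weakly trivial skew brace $A$ as a subdirect product of the trivial skew brace $A/A^2$ and the almost trivial skew brace $A/A_\op^2$. Two-sidedness is expressed by the universally quantified identity $(a+b)\circ c = a\circ c - c + b\circ c$, so it is inherited by subskew braces; it is therefore enough to show that the ambient product $A/A^2 \times A/A_\op^2$ is two-sided, and for this it suffices to know that the two factors are two-sided and that the class of two-sided skew braces is closed under direct products.

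First I would record that trivial and almost trivial skew braces are two-sided. The trivial skew brace $\Triv(G)$ is two-sided by the example in the preliminaries (equivalently, since $(A,+)=(A,\circ)$, every inner automorphism of $(A,\circ)$ is already an automorphism of $(A,+)$, so \cref{prop: twosided iff mult conj additive automorphism} applies). For the almost trivial case I would use that $A$ and its opposite $A_\op$ have the same multiplicative group and the same underlying additive automorphisms, since a bijection preserves $+$ if and only if it preserves $+_\op$; by \cref{prop: twosided iff mult conj additive automorphism}, $A$ is then two-sided if and only if $A_\op$ is, and hence $\opTriv(G)=\Triv(G)_\op$ is two-sided. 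Closure of two-sidedness under direct products is immediate, as the right brace identity is verified componentwise. Combining these facts, $A/A^2 \times A/A_\op^2$ is two-sided, and the subdirect embedding of \cref{prop: weakly is subdirect product} realizes $A$ as a subskew brace of it, so $A$ is two-sided.

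I do not expect a genuine obstacle here, since the essential work is already done in \cref{prop: weakly is subdirect product}. The only step needing a little care is the passage to the opposite brace for the almost trivial case, where one must check that preserving $+$ and preserving $+_\op$ amount to the same condition on a map; this is where a careless argument might slip, but it is a one-line verification.
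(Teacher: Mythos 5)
Your proof is correct and matches the paper's own argument: the paper likewise deduces two-sidedness from the subdirect embedding of \cref{prop: weakly is subdirect product}, using that $A/A^2$ and $A/A^2_\op$ are two-sided and that two-sidedness passes to direct products and subskew braces. Your extra verification that almost trivial skew braces are two-sided (via the observation that a bijection preserves $+$ iff it preserves $+_\op$, so $A$ is two-sided iff $A_\op$ is) is a correct filling-in of a step the paper leaves implicit.
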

\begin{proof}
Let $A$ be a weakly trivial skew brace. As $A/A^2$ and $A/A^2_\op$ are two-sided, so is $A/A^2\times A/A^2_\op$ and therefore also $A$ by \cref{prop: weakly is subdirect product}.
\end{proof}
We now classify all weakly trivial skew braces. Our main tool for this is a generalisation of Goursat's lemma \cite{Gou89} to skew braces. For this we first need the existence of pullbacks in the category of skew braces, whose proof is left to the reader.
\begin{proposition}
Let $B,C,D$ be skew braces with skew brace homomorphisms $f:B\to D$, $g:C\to D$. Then the pullback of \[
\begin{tikzcd}
                 & B \arrow[d, "f"] \\
C \arrow[r, "g"] & D               
\end{tikzcd}\]
exists and is, up to isomorphism, given by
\[B\times_D C:=\{(a,b)\mid f(a)=g(b)\}\subseteq B\times C,\]
together with the projection maps $\pr_B:B\times_D C\to B$ and $\pr_C:B\times_D C\to C$.
\end{proposition}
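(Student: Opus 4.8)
The plan is to proceed in the standard two-step fashion for verifying a pullback: first establish that $A\times_C B$ is genuinely an object of the category, namely a subskew brace of $A\times B$ on which the two projections restrict to homomorphisms making the square commute, and then verify the universal property directly.

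First I would show that $A\times_C B$ is a subskew brace of $A\times B$. Since $f$ and $g$ are skew brace homomorphisms they preserve $0$, so $f(0)=0=g(0)$ and hence $(0,0)\in A\times_C B$. For closure, suppose $(a,b),(a',b')\in A\times_C B$, so that $f(a)=g(b)$ and $f(a')=g(b')$. Then $f(a+a')=f(a)+f(a')=g(b)+g(b')=g(b+b')$ and likewise $f(a\circ a')=g(b\circ b')$, so both $(a+a',b+b')$ and $(a\circ a',b\circ b')$ lie in $A\times_C B$. Finally $f(-a)=-f(a)=-g(b)=g(-b)$ and $f(\overline{a})=\overline{f(a)}=\overline{g(b)}=g(\overline{b})$, which gives closure under both the additive and the multiplicative inverse. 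Thus $A\times_C B$ is a subskew brace, and the maps $\pr_A,\pr_B$, being restrictions of the ambient projections $A\times B\to A$ and $A\times B\to B$, are skew brace homomorphisms. By the defining condition $f(a)=g(b)$ imposed on elements of $A\times_C B$, the square commutes, that is, $f\circ\pr_A=g\circ\pr_B$.

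Next I would verify the universal property. Let $D$ be any skew brace equipped with skew brace homomorphisms $p:D\to A$ and $q:D\to B$ satisfying $f\circ p=g\circ q$. Define $u:D\to A\times_C B$ by $u(d)=(p(d),q(d))$; this indeed lands in $A\times_C B$ precisely because $f(p(d))=g(q(d))$ for all $d\in D$. Since both coordinates of $u$ are skew brace homomorphisms and the operations on $A\times_C B$ are computed coordinatewise, $u$ is itself a skew brace homomorphism, and by construction $\pr_A\circ u=p$ and $\pr_B\circ u=q$. For uniqueness, any $u'$ with $\pr_A\circ u'=p$ and $\pr_B\circ u'=q$ must satisfy $u'(d)=(\pr_A(u'(d)),\pr_B(u'(d)))=(p(d),q(d))=u(d)$ for every $d\in D$, so $u'=u$.

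I do not expect a genuine obstacle here; this is essentially the construction of a set-theoretic pullback decorated with the two group structures. The only points that actually use the skew brace structure, as opposed to a bare fibre product of sets, are the four closure checks of the second paragraph and the observation that a homomorphism into $A\times_C B$ is determined by its two coordinates, and both of these are routine.
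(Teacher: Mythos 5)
Your proof is correct; the paper in fact leaves this verification to the reader, and your argument is precisely the intended one: check that the fibre product is a subskew brace of $A\times B$ (the brace compatibility \eqref{eq: left brace} being inherited from the ambient direct product), that the restricted projections are homomorphisms making the square commute, and then verify the universal property with the coordinatewise map $u(d)=(p(d),q(d))$. Nothing is missing and no step fails.
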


The following version of Goursat's lemma holds for skew braces.
\begin{lemma}\label{lem: goursat}
There is a bijective correspondence between subdirect products of skew braces $B$ and $C$ and triples $(I,J,\rho)$ where $I$, respectively $J$, is an ideal of $B$, respectively $C$, and $\rho:B/I\to C/J$ is an isomorphism of skew braces.
\end{lemma}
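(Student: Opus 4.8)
The plan is to mimic the classical proof of Goursat's lemma for groups, carrying the skew brace structure through at each step. Given a subdirect product $F \subseteq A \times B$, I would set
\[ I = \{ a \in A \mid (a,0) \in F\}, \qquad J = \{ b \in B \mid (0,b) \in F\}, \]
and define $\rho \colon A/I \to B/J$ by sending $a + I$ to $b + J$ whenever $(a,b) \in F$; subdirectness guarantees such a $b$ exists. Conversely, given a triple $(I,J,\rho)$, I would recover a subdirect product as the pullback of $A \to A/I \xrightarrow{\rho} B/J$ along $B \to B/J$, namely $F = \{(a,b) \mid \rho(a+I) = b + J\}$, which is a subskew brace of $A \times B$ by the pullback construction recorded above and is subdirect because $\rho$ is surjective. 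The bulk of the work is to check that these two assignments are well defined and mutually inverse.

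To see that $I$ is an ideal I would first note that it is a subskew brace, being closed under both operations and both inverses since $F$ is. For normality and $\lambda$-invariance the subdirect hypothesis is essential: given $a \in A$, choose $b$ with $(a,b) \in F$; then for $i \in I$ the elements $(a,b)+(i,0)-(a,b)$, $(a,b)\circ(i,0)\circ\overline{(a,b)}$ and $\lambda_{(a,b)}(i,0)$ all lie in $F$, and each has second coordinate $0$ because $\lambda_b(0)=0$ and conjugation fixes $0$. Hence $a+i-a$, $a\circ i \circ \overline{a}$ and $\lambda_a(i)$ lie in $I$, so $I$ is an ideal; the same argument applies to $J$. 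For $\rho$, independence of the representative follows since $(a,b),(a+i,b)\in F$ for $i \in I$, and independence of the choice of $b$ follows because $(a,b),(a,b')\in F$ forces $(0,-b+b')\in F$, so $-b+b'\in J$ and $b+J=b'+J$. That $\rho$ is a homomorphism for both $+$ and $\circ$ is immediate from $F$ being closed under both operations, and bijectivity follows from surjectivity of the projections (subdirectness) together with the computation that $\rho(a+I)=J$ forces $(0,b)\in F$ and hence $(a,0)=(a,b)-(0,b)\in F$, i.e. $a \in I$.

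Finally I would verify that the two assignments invert one another. Starting from $(I,J,\rho)$, building $F$ and re-extracting $(I',J',\rho')$ returns the same data: $I' = \{a \mid \rho(a+I)=J\} = I$ and $J' = \{b \mid b+J=J\} = J$ since $\rho$ is an isomorphism, while $\rho'=\rho$ by construction. Conversely, starting from $F$, the reconstructed $F' = \{(a,b)\mid \rho(a+I)=b+J\}$ contains $F$ by the definition of $\rho$; and any $(a,b)\in F'$ can be written as $(a,b_0)+(0,j)$ with $(a,b_0)\in F$ (subdirectness) and $j=-b_0+b\in J$, so $(a,b)\in F$ and thus $F'=F$.

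The step I expect to be the main obstacle is the ideal verification in the forward direction. Unlike the purely group-theoretic Goursat lemma, one must check invariance under the $\lambda$-action in addition to normality in both $(A,+)$ and $(A,\circ)$, and it is precisely here that the subdirect hypothesis is used, supplying for each $a$ a partner $b$ with $(a,b)\in F$ so that the relevant elements can be pushed through $F$ while keeping the second coordinate $0$. Once the ideal conditions and the bihomomorphism property of $\rho$ are in place, the remaining verifications are a routine transcription of the classical argument.
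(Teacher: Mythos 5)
Your proof is correct and takes essentially the same route as the paper's, which only sketches the construction: the kernels of the two projections yield the ideals $I$ and $J$, the relation $(a,b)\in F$ defines $\rho$, and the converse direction is the pullback, with all remaining verifications explicitly left to the reader. You have simply carried out those verifications (ideal conditions including $\lambda$-invariance, well-definedness and bijectivity of $\rho$, and the mutual-inverse checks), and they are all sound.
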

\begin{proof}
We only give a sketch of the construction. Further details are just as in the classical case and are left to the reader. Let $A$ be a subdirect product of $B$ and $C$, where we identify $A$ with the image of its embedding. Let $I\subseteq B$ be $B$ such that $I\times \{0\}=A\cap (B\times \{0\})$ and $J\subseteq C$ such that $\{0\}\times J=A\cap (\{0\}\times C)$. Then $I$, respectively $J$, is an ideal of $B$, respectively $C$, and the map $\rho:B/I\to C/J$, given by $\rho(a)=b$ if and only if $(a,b)\in A$, is a well-defined skew brace isomorphism. We therefore obtain a triple $(I,J,\rho)$. 

Conversely, if a triple $(I,J,\rho)$ is given, the pullback of
\[
\begin{tikzcd}
            &                       & B \arrow[d] \\
C \arrow[r] & C/J \arrow[r, "\rho"] & B/J        
\end{tikzcd}\]
yields a subdirect product of $B$ and $C$.
\end{proof}
\begin{lemma}\label{lem: A fits in pullback diagram}
Let $A$ be a weakly trivial skew brace, then $A$ fits in the following pullback diagram.
\[
\begin{tikzcd}
A \arrow[d] \arrow[r] & A/A^2 \arrow[d] \\
A/A^2_\op \arrow[r]   & A/(A^2+A^2_\op)  
\end{tikzcd}\]
\end{lemma}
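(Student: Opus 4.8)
The plan is to show that the canonical comparison map from $A$ into the pullback is an isomorphism of skew braces. First I would identify the four arrows: all are the obvious quotient maps, where the right and bottom maps $A/A^2\to A/(A^2+A^2_\op)$ and $A/A^2_\op\to A/(A^2+A^2_\op)$ are well defined because both $A^2$ and $A^2_\op$ are contained in the ideal $A^2+A^2_\op$ (recall that the sum of two ideals is again an ideal, so the lower-right corner is a genuine quotient skew brace). The square commutes since both composites $A\to A/(A^2+A^2_\op)$ are the quotient map $a\mapsto a+(A^2+A^2_\op)$. By the pullback proposition the pullback of the lower-right cospan is $P=\{(\bar a,\bar b)\in A/A^2\times A/A^2_\op : \bar a \text{ and } \bar b \text{ have equal image in } A/(A^2+A^2_\op)\}$, and the universal property yields a unique skew brace homomorphism $\phi\colon A\to P$, $\phi(a)=(a+A^2,a+A^2_\op)$, commuting with the projections. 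It then suffices to prove that $\phi$ is bijective, for then $A$ together with the two quotient maps satisfies the pullback property.

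Injectivity is immediate, since $\ker\phi=A^2\cap A^2_\op=\{0\}$ because $A$ is weakly trivial. Surjectivity is the only real point. Given $(\bar a,\bar b)\in P$, choose lifts $a,b\in A$; the matching condition says precisely that $-b+a\in A^2+A^2_\op$. Since $A^2$ and $A^2_\op$ are normal subgroups of $(A,+)$, their sum is a subgroup and $A^2+A^2_\op=A^2_\op+A^2$, so I may write $-b+a=t+r$ with $t\in A^2_\op$ and $r\in A^2$. Setting $c=a-r$ gives $-a+c=-r\in A^2$, so $c+A^2=\bar a$, while $c=a-r=b+t$ (using $a=b+t+r$) gives $-b+c=t\in A^2_\op$, so $c+A^2_\op=\bar b$. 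Hence $\phi(c)=(\bar a,\bar b)$ and $\phi$ is onto.

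The one delicate point I expect to be the main obstacle is exactly this bookkeeping in the surjectivity step, where $(A,+)$ need not be abelian; the argument goes through only because $A^2$ and $A^2_\op$ are normal in $(A,+)$, which is what lets me reorder the decomposition $A^2+A^2_\op=A^2_\op+A^2$ and conjugate freely. Alternatively, the whole statement can be read off from the skew brace version of Goursat's lemma: $A$ is a subdirect product of $A/A^2$ and $A/A^2_\op$, and identifying the associated Goursat data via the third isomorphism theorem as $I=(A^2+A^2_\op)/A^2$, $J=(A^2+A^2_\op)/A^2_\op$ and $\rho=\id$ on $A/(A^2+A^2_\op)$ recovers exactly the displayed pullback square.
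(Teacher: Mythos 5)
Your proof is correct, and your main argument takes a genuinely different route from the paper's. The paper proves the lemma by running its skew brace version of Goursat's lemma (\cref{lem: goursat}) on the subdirect embedding $\iota:A\to A/A^2\times A/A^2_\op$ from \cref{prop: weakly is subdirect product}: identifying $A$ with its image, it computes the kernels of the two projections explicitly, namely $A^2=\{0\}\times(A/A^2_\op)^2$ and $A^2_\op=(A/A^2)^2_\op\times\{0\}$ (the key point being that $(a',a)*(b',b)=(a'*b',a*b)=(0,a*b)$ because the first factor is trivial), then identifies both Goursat quotients with $A/(A^2+A^2_\op)$ and observes that the gluing isomorphism $\rho$ must be the identity for the square to commute. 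You instead verify the universal property directly: the comparison map $\phi(a)=(a+A^2,a+A^2_\op)$ into the explicit pullback $P$ has kernel $A^2\cap A^2_\op=\{0\}$ by weak triviality, and your surjectivity computation is sound --- I checked the non-abelian coset bookkeeping: writing $-b+a=t+r$ with $t\in A^2_\op$, $r\in A^2$ (legitimate since both are ideals, hence normal in $(A,+)$, so $A^2+A^2_\op=A^2_\op+A^2$) and setting $c=a-r$ indeed gives $-a+c=-r\in A^2$ and $-b+c=t\in A^2_\op$, so $\phi(c)=(\bar a,\bar b)$. Your route buys self-containedness (no Goursat needed) and it isolates precisely where weak triviality enters: only in injectivity, so your surjectivity step in fact shows that for an \emph{arbitrary} skew brace the square identifies $A/(A^2\cap A^2_\op)$ with the pullback, which is a slightly stronger statement. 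What the paper's route buys is the explicit kernel computations $A^2=\{0\}\times(A/A^2_\op)^2$ and $A^2_\op=(A/A^2)^2_\op\times\{0\}$, which are cited and reused verbatim in the subsequent classification of weakly trivial skew braces by triples $(G,H,\theta)$, so the Goursat detour pays off later in the paper. Finally, your closing alternative --- reading the square off from Goursat data $I=(A^2+A^2_\op)/A^2$, $J=(A^2+A^2_\op)/A^2_\op$, $\rho=\id$ --- is, modulo the identification $(A/A^2)^2_\op=(A^2+A^2_\op)/A^2$, exactly the paper's proof, so you have effectively given both arguments.
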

\begin{proof}
We denote the image of the embedding of $A$ into $A/A^2\times A/A^2_\op$ by $A'$ and now proceed as explained in the proof of \cref{lem: goursat}. Clearly $$A'\cap (A/A^2\times \{0\})=\{(a+A^2,0)\mid a\in A^2_\op\}=(A^2)_\op^2\times \{0\}.$$
Likewise, one finds that $A'\cap (\{0\}\times A/A^2_\op)=\{0\}\times (A/A^2)^2_\op$. It now is easily verified that $(A/A^2_\op)/(A/A^2_\op)^2\cong A/(A^2+A^2_\op)$ and $(A/A^2)/(A/A^2)^2_\op\cong A/(A^2+A^2_\op)$. The isomorphism 
\begin{equation*}
    \rho: A/(A^2+A^2_\op)\cong (A/A^2_\op)/(A/A^2_\op)^2\to (A/A^2)/(A/A^2)^2_\op\cong A/(A^2+A^2_\op),
\end{equation*}
must clearly be the identity in order to make the above diagram commute.
\end{proof}
\begin{definition}\label{def: triples weakly trivial}
Consider the class of triples $(G,H,\theta)$ with $G$ and $H$ groups and $\theta: \Ab(G)\to \Ab(H)$ an isomorphism. We say that two such triples $(G_1,H_1,\theta_1)$ and $(G_2,H_2,\theta_2)$ are equivalent if there exist group isomorphisms $\phi_G: G_1\to G_2$, $\phi_H:H_1\to H_2$ such that 
\[
\begin{tikzcd}
G_1 \arrow[r] \arrow[d, "\phi_G"] & \Ab(G_1) \arrow[r, "\theta_1"] \arrow[d, "\overline{\phi_G}"] & \Ab(H_1) \arrow[d, "\overline{\phi_H}"] & H_1 \arrow[l] \arrow[d, "\phi_H"] \\
G_2 \arrow[r]                     & \Ab(G_2) \arrow[r, "\theta_2"]                                & \Ab(H_2)                                & H_2 \arrow[l]                    
\end{tikzcd}\]
commutes, where $\overline{\phi_G}$ and $\overline{\phi_H}$ are the unique induced group isomorphisms making the square on the left and right hand side commute. 
\end{definition}
\begin{theorem}\label{theorem: classification weakly trivial}
There exists a bijection between isomorphism classes of weakly trivial skew braces and equivalence classes of triples as described in \cref{def: triples weakly trivial}. 
\end{theorem}
\begin{proof}
Let $A$ be a weakly trivial skew brace. We know that $\Ab(A/A^2,\circ)\cong (A/(A^2+A^2_\op),\circ)$ and $\Ab(A/A^2_\op,\circ)\cong (A/(A^2+A^2_\op),\circ)$. Hence $((A/A^2,\circ),(A/A^2_\op,\circ),\rho)$, where $\rho$ is the canonical group isomorphism, is a well-defined triple.

Conversely, given a triple $(G,H,\rho)$ we can construct the pullback of the diagram
\[
\begin{tikzcd}
                     &                                   & \Triv(G) \arrow[d] \\
\opTriv(H) \arrow[r] & \opTriv(\Ab(H)) \arrow[r, "\rho"] & \Triv(\Ab(G))     
\end{tikzcd}\]
which clearly is a weakly trivial skew brace. 

It follows from \cref{lem: A fits in pullback diagram} that if we start from a weakly trivial skew brace, consider its associated triple $((A/A^2,\circ),(A/A^2_\op,\circ),\rho)$ and then again take the pullback as described above we end up with a skew brace isomorphic to $A$. 

Conversely, start from a triple $(G,H,\rho)$ and let $A\subseteq \Triv(G)\times \opTriv(H)$ be its associated pullback. We want to prove that the triple associated to $A$ is isomorphic to $(G,H,\rho)$, which we can do by showing that the kernels of the vertical maps in the following commutative diagram are $A^2,A^2+A^2_\op, A^2+A^2_\op$ and $A^2_\op$ respectively. 
\[
\begin{tikzcd}
(A,\circ) \arrow[r,"\id"] \arrow[d, "\pr_G"] & (A,\circ)  \arrow[r,"\id"] \arrow[d] & (A,\circ) \arrow[d] & (A,\circ) \arrow[l,"\id"] \arrow[d, "\pr_H"] \\
G \arrow[r]                     & \Ab(G) \arrow[r, "\theta_2"]                                & \Ab(H)                                & H \arrow[l]                    
\end{tikzcd}\]
The kernel of the first vertical map is clearly $A\cap (\{0\}\times H)=\{0\}\times [H,H]$. Also, $A^2$ is generated by the elements $(g_1,h_1)*(g_2,h_2)=(h_2^{-1}h_1h_2h_1^{-1})$ for $(g_1,h_1),(g_2,h_2)\in A$. As every element of $H$ can appear as the second coordinate of an element of $A$, the equality $A\cap (\{0\}\times H)=A^2$ follows. Similarly it follows that the kernel of $\pi_H$ is equal to $A^2_\op$. The kernel of the second vertical map must be the commutator subgroup of $(A/A^2,\circ)$, which is $(A/A^2)_\op^2$, and similar for the third vertical map.
\end{proof}
When one looks at small weakly trivial skew braces, using for example the GAP-package \cite{YBGAP}, they all seem to have an isomorphic additive and multiplicative group. The following example shows that this is not always the case.
\begin{example}
Consider the group $$G:=\langle a,b\mid a^5=b^4=0,b^{-1}ab=a^2\rangle\cong C_5\rtimes C_4.$$
Its derived subgroup is the subgroup generated by $a$, hence $\Ab(G)\cong C_4$ is generated by the equivalence class of $b$. Let $A$ be the weakly trivial skew brace associated to the triple $(G,G,\id)$. It is easily seen that 
\[A=\{(a^kb^l,a^mb^l)\mid k,l,m\in \Z\}\subseteq \Triv(G)\times \opTriv(G).\]
One easily verifies that $(A,\circ)$ is isomorphic to the semidirect product $$C_5^2\rtimes_1 C_4:=\langle x,y,z\mid x^5=y^5=z^4=0,xy=yx,z^{-1}xz=x^2,z^{-1}yz=y^2\rangle,$$
where $(a,0)\mapsto x$, $(0,a)\mapsto y$ and $(b,b)\mapsto z$. Meanwhile, as $-b+a+b=bab^{-1}=a^{-2}=a^3$ in $\opTriv(G)$, we find that $(A,+)$ is isomorphic to  
$$C_5^2\rtimes_2 C_4:=\langle x,y,z\mid x^5=y^5=z^4=0,xy=yx,z^{-1}xz=x^2,z^{-1}yz=y^3\rangle,$$
where $(a,0)\mapsto x$, $(0,a)\mapsto y$ and $(b,b)\mapsto z$.
However, $C_5^2\rtimes_1 C_4$ is not isomorphic to $C_5^2\rtimes_2 C_4$, as in the first group all subgroups of order 5 are normal, but in the latter the subgroup of order 5 generated by $xy$ is not normal. 
\end{example}
\begin{example}
The previous example can be generalized by replacing $G$ by $\Hol(C_p)=C_p\rtimes \Aut(C_p)$ with $p>3$ a prime. In this way, one obtains an infinite amount of weakly trivial skew braces with non-isomorphic additive and multiplicative group.
\end{example}
Although the class of weakly trivial skew braces is closed under taking direct products and subskew braces, it is only closed under quotients by an ideal if the ideal satisfies some extra property.
\begin{lemma}\label{lem: ideal of weakly trivial}
Let $A$ be weakly trivial skew brace and consider its canonical embedding $\iota: A\to A/A^2\times A/A^2_\op$. Then $I\subseteq A$ is an ideal of $A$ if and only if $\iota(I)$ is a normal subgroup of $(A/A^2\times A/A^2_\op,\circ)$.
\end{lemma}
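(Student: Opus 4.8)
The plan is to transport the four defining conditions of an ideal across $\iota$ and read them off inside the ambient additive group. Since $A/A^2$ is trivial its additive and multiplicative groups agree; call this group $P$. Since $A/A_\op^2$ is almost trivial, write $Q$ for its multiplicative group, so that its additive group is the opposite group $Q^{\op}$. Thus the ambient group is $P\times Q^{\op}$, and I write $\pi_1\colon A\to A/A^2$ and $\pi_2\colon A\to A/A_\op^2$ for the two projections, which are onto by subdirectness. The whole proof rests on two identities, obtained by computing $\lambda$ and $\lambda^{\op}$ componentwise on the product brace $A/A^2\times A/A_\op^2$ and using that $\lambda$ is the identity on the trivial factor while it is conjugation on the almost trivial factor (and dually for $\lambda^{\op}$):
\[
\iota(\lambda_a(i))=\bigl(\pi_1(i),\,\pi_2(a)\,\pi_2(i)\,\pi_2(a)^{-1}\bigr),\qquad
\iota(\lambda^{\op}_a(i))=\bigl(\pi_1(a)\,\pi_1(i)\,\pi_1(a)^{-1},\,\pi_2(i)\bigr).
\]
In words, under $\iota$ the map $\lambda_a$ acts as conjugation by $\pi_2(a)$ in the $Q$-coordinate and as the identity in the $P$-coordinate, while $\lambda^{\op}_a$ acts as conjugation by $\pi_1(a)$ in the $P$-coordinate and as the identity in the $Q$-coordinate.

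Next I would record the elementary group-theoretic reduction: a subgroup of $P\times Q^{\op}$ is normal if and only if it is invariant under conjugation by $P\times\{0\}$ and, separately, by $\{0\}\times Q^{\op}$, since these two subgroups commute and generate the product. Conjugating $\iota(i)$ by $(\pi_1(a),0)$ replaces the first coordinate $\pi_1(i)$ by $\pi_1(a)\pi_1(i)\pi_1(a)^{-1}$ and leaves the second unchanged; conjugating by $(0,\pi_2(b))$ changes only the second coordinate. Comparing with the two displayed identities identifies these coordinate-conjugations exactly with the images of $\lambda^{\op}_a$ and of $\lambda$, once one notes that conjugation by $g$ in $Q^{\op}$ is conjugation by $g^{-1}$ in $Q$. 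Because $\pi_1$ and $\pi_2$ are surjective, as $a$ ranges over $A$ the conjugators $\pi_1(a)$ and $\pi_2(a)$ range over all of $P$ and $Q$.

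For the forward implication, if $I$ is an ideal then conjugation of $\iota(i)$ in the second coordinate produces $\iota(\lambda_{\overline{b}}(i))$ for a suitable $b$, which lies in $\iota(I)$ because $\lambda$-invariance gives $\lambda_{\overline b}(i)\in I$. Conjugation in the first coordinate produces $\iota(\lambda^{\op}_a(i))$; here I would use the preliminaries' formula $\lambda^{\op}_a(i)=a+\lambda_a(i)-a$, so that $\lambda$-invariance places $\lambda_a(i)$ in $I$ and normality of $I$ in $(A,+)$ then places $a+\lambda_a(i)-a$ in $I$ as well. By the reduction, $\iota(I)$ is normal. For the converse, assume $\iota(I)$ is normal. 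Conjugating by $\iota(a)$ and using injectivity of $\iota$ shows at once that $I$ is normal in $(A,+)$. The first identity exhibits $\iota(\lambda_a(i))$ as the conjugate of $\iota(i)$ by $(0,\pi_2(a)^{-1})$, hence $\lambda_a(i)\in I$; and a direct computation gives $\iota(a\circ i\circ\overline a)=\bigl(\pi_1(a)\pi_1(i)\pi_1(a)^{-1},\,\pi_2(a)\pi_2(i)\pi_2(a)^{-1}\bigr)$, which is the conjugate of $\iota(i)$ by $(\pi_1(a),\pi_2(a)^{-1})$ and therefore lies in $\iota(I)$, giving normality of $I$ in $(A,\circ)$. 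All four conditions hold, so $I$ is an ideal.

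The main obstacle is setting up this dictionary correctly rather than any single hard estimate: one must compute the componentwise action of $\lambda$ and $\lambda^{\op}$ on the product brace and keep careful track of the opposite-group bookkeeping in the $Q^{\op}$-coordinate, namely the inverses that appear when passing between conjugation in $Q$ and in $Q^{\op}$. Once the two displayed identities and the normality reduction are in place, both implications are short; the only point where more than additive normality is used is the first-coordinate conjugation in the forward direction, where $\lambda$-invariance is genuinely needed to land $\lambda^{\op}_a(i)$ inside $I$.
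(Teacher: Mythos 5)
Your proof is correct and takes essentially the same approach as the paper: the paper likewise computes the $\lambda$- and $\lambda^{\op}$-maps componentwise on $A/A^2\times A/A^2_\op$ (obtaining $\lambda_{(a,a')}(b,b')=(b,-a'+b'+a')$ and $\lambda^{\op}_{(a,a')}(b,b')=(-a+b+a,b')$, the additive-notation form of your two displayed identities) and uses surjectivity of the two projections to realize all ambient coordinate conjugations, dismissing the converse direction as clear. Your version is simply a fuller write-up of the same argument, spelling out the ``clear'' direction and the opposite-group bookkeeping explicitly.
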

\begin{proof}
Clearly $\lambda_{(a_1,a_2)}(b_1,b_2)=(b_1,a_2\circ b_2\circ \overline{a_2} )$ and $\lambda^\op_{(a_1,a_2)}(b_1,b_2)=(\overline{a_1}\circ b_1\circ a_1,b_2)$, for all $(a_1,a_2),(b_1,b_2)\in \iota(A)$. As the projections $\iota(A)\to A/A^2$ and $\iota(A)\to A/A^2_\op$ are surjective, the result follows.
\end{proof}

\begin{lemma}\label{lem: quotient of weakly trivial}
Let $A$ be a weakly trivial skew brace and $I$ an ideal of $A$, then $A/I$ is weakly trivial if and only if $(I\cap A^2)+(I\cap A^2_\op)=I\cap (A^2+A^2_\op)$. 
\end{lemma}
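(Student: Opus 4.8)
The plan is to first translate the statement ``$A/I$ is weakly trivial'' into a condition on subgroups of $(A,+)$, and then to isolate a purely group-theoretic equivalence about normal subgroups with trivial intersection.

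First I would observe that the quotient map $\pi\colon A\to A/I$ is a skew brace homomorphism, so $\pi(a)*\pi(b)=\pi(a*b)$ and $\pi(a)*_\op\pi(b)=\pi(a*_\op b)$ for all $a,b\in A$. Taking additively generated subgroups, this gives $(A/I)^2=\pi(A^2)=(A^2+I)/I$ and, using that ideals of $A$ and $A_\op$ coincide, $(A/I)^2_\op=\pi(A^2_\op)=(A^2_\op+I)/I$. Since $A^2$ and $A^2_\op$ are ideals of $A$ and $I$ is an ideal, the subgroups $A^2+I$ and $A^2_\op+I$ of $(A,+)$ contain $I$, so by the correspondence theorem
\[(A/I)^2\cap (A/I)^2_\op=\bigl((A^2+I)\cap(A^2_\op+I)\bigr)/I.\]
Hence $A/I$ is weakly trivial if and only if $(A^2+I)\cap(A^2_\op+I)=I$.

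It then remains to prove the following abstract equivalence, which I would apply with $B=A^2$ and $C=A^2_\op$: for normal subgroups $B,C,I$ of a group $(A,+)$ with $B\cap C=\{0\}$, one has $(B+I)\cap(C+I)=I$ if and only if $(I\cap B)+(I\cap C)=I\cap(B+C)$. Here the hypothesis $B\cap C=\{0\}$ is exactly the weak triviality of $A$. The key structural remark is that two normal subgroups with trivial intersection commute elementwise, as $[B,C]\subseteq B\cap C=\{0\}$; thus $B+C$ is their internal direct sum and every element of $B+C$ has a unique decomposition $b+c$ with $b\in B$ and $c\in C$.

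For the forward implication I would take $x\in I\cap(B+C)$, write $x=b+c$, and note that $b=x+(-c)\in(C+I)\cap B\subseteq(B+I)\cap(C+I)=I$ and symmetrically $c=(-b)+x\in(B+I)\cap(C+I)=I$, so $x\in(I\cap B)+(I\cap C)$; the reverse inclusion is automatic. For the converse I would take $x\in(B+I)\cap(C+I)$, write $x=b+i_1=c+i_2$ with $b\in B$, $c\in C$, $i_1,i_2\in I$, and compute $b+(-c)=i_2+(-i_1)\in I\cap(B+C)=(I\cap B)+(I\cap C)$; comparing this with the unique $B\oplus C$ decomposition of $b+(-c)$ forces $b\in I\cap B$, whence $x=b+i_1\in I$. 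The main obstacle I anticipate is that $(A,+)$ need not be abelian, so sums and intersections of subgroups must be manipulated carefully; this is precisely what the elementwise commuting of $A^2$ and $A^2_\op$ resolves, making the decomposition in $B+C$ unique and the final comparison of components legitimate.
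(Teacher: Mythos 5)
Your proof is correct, but it is organized rather differently from the paper's. The paper argues directly with coset representatives: since $(A/I)^2$ and $(A/I)^2_\op$ are the images of $A^2$ and $A^2_\op$, any nonzero class $a+I\in (A/I)^2\cap (A/I)^2_\op$ yields $b\in A^2$, $c\in A^2_\op$ with $b+I=c+I$, and the paper exhibits $b-c$ as an element of $I\cap(A^2+A^2_\op)$ outside $(I\cap A^2)+(I\cap A^2_\op)$, and conversely turns any element $b+c$ of the difference set into a nonzero class $b+I=-c+I$ in the intersection; notably, it leaves implicit both the identification $(A/I)^2=(A^2+I)/I$ and the uniqueness of decompositions in $A^2+A^2_\op$. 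You instead factor the argument into two clean pieces: the correspondence-theorem reformulation ``$A/I$ weakly trivial $\iff (A^2+I)\cap(A^2_\op+I)=I$'', and a purely group-theoretic equivalence for normal subgroups $B,C,I$ with $B\cap C=\{0\}$, with the key structural point --- that $[A^2,A^2_\op]\subseteq A^2\cap A^2_\op=\{0\}$ forces elementwise commutation and hence unique $B\oplus C$ decompositions --- stated explicitly rather than tacitly. Your route buys modularity and makes the nonabelian subtleties visible (the paper's tacit claim that $b-c\notin(I\cap A^2)+(I\cap A^2_\op)$ is exactly your uniqueness argument); the paper's buys brevity, and at bottom the witnesses are the same elements. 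One small point of care: from $b+i_1=c+i_2$ one directly obtains $-c+b=i_2-i_1$, and your written identity $b+(-c)=i_2+(-i_1)$ then requires the commutation $b-c=-c+b$ coming from $[B,C]=\{0\}$; since you established that commutation beforehand, this is legitimate, but it deserves the one-line justification.
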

\begin{proof}
Let $I$ be an ideal of $A$ and consider the skew brace $A/I$. Assume that $a+I\in (A/I)^2\cap (A/I)^2_\op$. This means that there exist elements $b\in A^2$ and $c\in A^2_\op$ such that $a+I=b+I=c+I$. If $a\notin I$ then also, $b,c\notin I$. Hence, $b-c\in I\cap (A^2+A^2_\op)$ is an element contained in $I\cap (A^2+A^2_\op)$ but not in $(I\cap A^2)+(I\cap A^2_\op)$. Also, every element $b+c$ in $I\cap (A^2+A^2_\op)\setminus ((I\cap A^2)+(I\cap A^2_\op))$, where $b\in A^2$ and $c\in A^2_\op$, yields a non-trivial element $b+I=-c+I\in (A/I)^2\cap (A/I)^2_\op$.
\end{proof}
\begin{example}
Let $G=D_8$, the dihedral group of order 8. $A$ be the weakly trivial skew brace associated to the triple $(G,G,\id)$. Then,
\begin{equation*}
    A=\{(g,h)\mid g+[G,G]=h+[G,G]\},
\end{equation*}
where we set $G=D_8$. In particular, $I=\{(g,g)\mid g\in Z(G)\}\subseteq A$ as $Z(G)=[G,G]$. By \cref{lem: ideal of weakly trivial}, it follows that $I$ is an ideal of $A$. But $I\cap A^2=I\cap A^2_\op=\{0\}$, so from \cref{lem: quotient of weakly trivial} it follows that $A/I$ is not weakly trivial. The same argument can be repeated for any group $G$ such that $Z(G)\cap [G,G]$ is non-trivial.
\end{example}
To conclude this section, we study some structural properties of weakly trivial skew braces. The following lemma is left as an exercise.
\begin{lemma}\label{lem: derived length subdirect product}
Let $\iota: G\to G_1\times G_2$ be a subdirect product of groups. Then $G$ is soluble if and only if $G_1\times G_2$ is soluble and in that case the derived length of $G$ and $G_1\times G_2$ coincide. Similarly, $G$ is nilpotent if and only if $G_1\times G_2$ is nilpotent and the nilpotency class of $G$ and $G_1\times G_2$ coincide.
\end{lemma}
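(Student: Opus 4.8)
The plan is to reduce everything to three standard facts about the derived series and the lower central series: both are non-increasing under passage to subgroups, non-increasing under passage to quotients, and compatible with direct products in the sense that $(G_1\times G_2)^{(n)}=G_1^{(n)}\times G_2^{(n)}$ and $\gamma_n(G_1\times G_2)=\gamma_n(G_1)\times\gamma_n(G_2)$. The role of the subdirect hypothesis is precisely to guarantee that the projections $\pi_{G_i}:G\to G_i$ are surjective, so that each $G_i$ is a quotient of $G$, while $G$ itself is a subgroup of $G_1\times G_2$; the whole argument is then a sandwich between these two observations.

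I would first treat solubility. For the backward direction, $G$ is a subgroup of the soluble group $G_1\times G_2$ and hence soluble. For the forward direction, $G_1$ and $G_2$ are images of the soluble group $G$ under the surjective projections, hence soluble, and a direct product of two soluble groups is soluble. To pin down the derived length $d$, write $d(G_1\times G_2)=\max(d(G_1),d(G_2))$, which follows from the product formula for the derived series. The subgroup inclusion gives $d(G)\le d(G_1\times G_2)$, while the quotient maps give $d(G_i)\le d(G)$ for $i=1,2$ and therefore $\max(d(G_1),d(G_2))\le d(G)$. Combining the two inequalities yields $d(G)=d(G_1\times G_2)$.

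The nilpotent case is handled verbatim, replacing the derived series by the lower central series, the word \emph{soluble} by \emph{nilpotent}, and the derived length by the nilpotency class $c$: subgroups and quotients of nilpotent groups are nilpotent with class at most that of the original, a direct product of nilpotent groups is nilpotent with $c(G_1\times G_2)=\max(c(G_1),c(G_2))$, and the same sandwich $c(G_1\times G_2)=\max(c(G_1),c(G_2))\le c(G)\le c(G_1\times G_2)$ forces equality.

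There is essentially no obstacle here beyond recalling these standard facts; the only points that deserve a moment's care are the direct-product formulae for the two characteristic series, which are what convert the individual invariants $d(G_i)$ and $c(G_i)$ into the single invariant of $G_1\times G_2$, and the surjectivity built into the definition of a subdirect product, which is what makes the lower bound via quotients available.
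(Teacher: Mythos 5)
Your proof is correct and complete: the sandwich $\max(d(G_1),d(G_2))=d(G_1\times G_2)$ via the product formula for the derived series, together with $G$ being a subgroup of $G_1\times G_2$ and each $G_i$ a surjective image of $G$, is exactly the standard argument, and the nilpotent case does carry over verbatim with the lower central series. The paper leaves this lemma as an exercise, so there is no proof to diverge from; yours is evidently the intended one.
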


\begin{corollary}\label{cor: derived length weakly trivial}
Let $A$ be a weakly trivial skew brace, then $(A,+)$ is soluble if and only if $(A,\circ)$ is soluble. In that case, their derived lengths coincide and $A$ is soluble as a skew brace. 
\end{corollary}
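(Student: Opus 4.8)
The plan is to exploit the subdirect product description from \cref{prop: weakly is subdirect product}, which realizes $A$ as a subdirect product of the trivial skew brace $A/A^2$ and the almost trivial skew brace $A/A^2_\op$. Write $G=(A/A^2,\circ)$ and $H=(A/A^2_\op,\circ)$. The first thing I would record is that, since the two projections $A\to A/A^2$ and $A\to A/A^2_\op$ are surjective skew brace homomorphisms, they are in particular surjective on both the additive and the multiplicative groups. Hence $(A,+)$ is a subdirect product of $(A/A^2,+)$ and $(A/A^2_\op,+)$, while $(A,\circ)$ is a subdirect product of $(A/A^2,\circ)$ and $(A/A^2_\op,\circ)$, so that \cref{lem: derived length subdirect product} applies to both.

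The key observation is then that the two relevant direct products are isomorphic as groups. For the trivial factor the additive and multiplicative operations coincide, so $(A/A^2,+)=(A/A^2,\circ)=G$. For the almost trivial factor the additive group is by definition the opposite of the multiplicative group, and the inversion map yields $(A/A^2_\op,+)\cong(A/A^2_\op,\circ)=H$. Therefore $(A/A^2,+)\times(A/A^2_\op,+)\cong G\times H\cong(A/A^2,\circ)\times(A/A^2_\op,\circ)$. Applying \cref{lem: derived length subdirect product} to each of the two subdirect products, we get that $(A,+)$ is soluble if and only if $G\times H$ is soluble if and only if $(A,\circ)$ is soluble, and in the soluble case the derived lengths of $(A,+)$, of $G\times H$, and of $(A,\circ)$ all coincide.

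It remains to check that $A$ is then soluble as a skew brace. Here I would argue directly on the embedding $A\subseteq \Triv(G)\times \opTriv(H)$, using that the skew brace derived series is inherited by subskew braces and splits over direct products, so that $A_n\subseteq \Triv(G)_n\times \opTriv(H)_n$ for all $n$. Since the $*$-operation vanishes on a trivial skew brace, $\Triv(G)_2=\{0\}$; and since $g*h=h^{-1}ghg^{-1}$ on an almost trivial skew brace, an easy induction identifies $\opTriv(H)_n$ with the $(n-1)$-st term of the derived series of $H$. As $H$ is a quotient of the soluble group $(A,\circ)$, it is soluble, whence $\opTriv(H)$, the direct product, and finally $A$ are soluble as skew braces.

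The only real bookkeeping, and the step I would be most careful with, is the interaction between the additive and multiplicative groups through the opposite construction in the almost trivial factor; once that identification is in place, everything reduces to \cref{lem: derived length subdirect product} together with the elementary derived-series computations on trivial and almost trivial skew braces, which are routine given the formulas recalled in the preliminaries.
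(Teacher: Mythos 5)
Your proposal is correct and follows essentially the same route as the paper: both apply \cref{lem: derived length subdirect product} to the two subdirect products $(A,+)\to (A/A^2,+)\times(A/A^2_\op,+)$ and $(A,\circ)\to (A/A^2,\circ)\times(A/A^2_\op,\circ)$, use the isomorphism of the two direct products (which you justify explicitly via the inversion map on the opposite group), and deduce skew brace solubility from the solubility of the trivial and almost trivial factors. The only cosmetic difference is that you bootstrap the last step from the solubility of $(A,\circ)$ while the paper uses $(A,+)$, which is equivalent once the first part is established.
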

\begin{proof}
Consider the subdirect product $\iota:A\to A/A^2\times A/A^2_\op$, which implies that $\iota:(A,+)\to (A/A^2,+)\times (A/A^2_\op,+)$ and $\iota:(A,\circ)\to (A/A^2,\circ)\times (A/A^2_\op,\circ)$ are subdirect products of groups. As $(A/A^2,+)\times (A/A^2_\op,+)\cong (A/A^2,\circ)\times (A/A^2_\op,\circ)$, the first part of the statement follows from \cref{lem: derived length subdirect product}. Now if $A$ has a soluble additive subgroup, so does $A/A_\op^2$. In particular, $A/A_\op^2$ is soluble. As $A/A^2$ is trivial, hence soluble, we conclude that $A/A^2\times A/A_\op^2$, and therefore also $A$, is soluble.
\end{proof}
The proof of \cref{cor: derived length weakly trivial} can easily be adapted to prove the following corollary.
\begin{corollary}\label{cor: nilpotency group weakly trivial}
Let $A$ be a weakly trivial skew brace, then $(A,+)$ is nilpotent if and only if $(A,\circ)$ is nilpotent. In that case, their nilpotency classes coincide and the skew brace $A$ is left and right nilpotent.
\end{corollary}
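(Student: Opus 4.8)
The plan is to follow the proof of \cref{cor: derived length weakly trivial} almost verbatim, replacing solubility by nilpotency throughout. First I would invoke the subdirect product $\iota\colon A\to A/A^2\times A/A^2_\op$ from \cref{prop: weakly is subdirect product}, which restricts to subdirect products of groups $\iota\colon (A,+)\to (A/A^2,+)\times (A/A^2_\op,+)$ and $\iota\colon (A,\circ)\to (A/A^2,\circ)\times (A/A^2_\op,\circ)$. Since $A/A^2$ is trivial and $A/A^2_\op$ is almost trivial, one has $(A/A^2,+)\times (A/A^2_\op,+)\cong (A/A^2,\circ)\times (A/A^2_\op,\circ)$, so applying the nilpotency part of \cref{lem: derived length subdirect product} to both subdirect products yields at once that $(A,+)$ is nilpotent if and only if $(A,\circ)$ is nilpotent, and that in this case their nilpotency classes agree, both being equal to that of the common direct product.

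For the second assertion I would first reduce to the two factors. Left and right nilpotency pass to subskew braces, since $C\subseteq A$ gives $C^n\subseteq A^n$ and $C^{(n)}\subseteq A^{(n)}$ by an immediate induction; and for a direct product the relevant series are computed componentwise, so $A/A^2\times A/A^2_\op$ is left (respectively right) nilpotent as soon as each factor is. The trivial skew brace $A/A^2$ satisfies $(A/A^2)^2=\{0\}$, hence is left and right nilpotent. It therefore remains to treat the almost trivial factor $\opTriv(H)$, where $H=(A/A^2_\op,\circ)$; this group is a homomorphic image of $(A,\circ)$ and so is nilpotent under our hypothesis.

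The heart of the matter, and the step I expect to require the most care, is to show that $\opTriv(H)$ is left and right nilpotent whenever $H$ is a nilpotent group. Here I would exploit the identity $x*y=y^{-1}xyx^{-1}$ recorded in the preliminaries, so that in particular $\opTriv(H)^2=[H,H]$, and then prove by induction that both the left series $\opTriv(H)^n$ and the right series $\opTriv(H)^{(n)}$ coincide with the lower central series $\gamma_n(H)$. Once this identification is established, nilpotency of $H$ forces these series to vanish, so that $\opTriv(H)$ is left and right nilpotent; combining this with the reductions above shows that $A$ itself is left and right nilpotent, and \cref{theorem: left right strong nilpotency} then even gives that $A$ is strongly nilpotent. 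The two inductions matching the skew brace series with $\gamma_n(H)$ are routine, but they are the only genuinely new computations relative to the soluble case and thus deserve the closest attention.
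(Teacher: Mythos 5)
Your proposal is correct and takes essentially the same route as the paper, which gives no separate argument here but states only that the proof of \cref{cor: derived length weakly trivial} can easily be adapted; your first paragraph is that adaptation verbatim, and your reduction to the factors via subskew braces and componentwise series is the intended reading of ``therefore also $A$''. The one computation you flag as delicate, namely that the left and right series of $\opTriv(H)$ both coincide with the lower central series $\gamma_n(H)$ via $g*h=h^{-1}ghg^{-1}$, is exactly the fact the paper itself invokes without proof (in \cref{prop: left and right nilpotency coincide weakly trivial}) when asserting that for almost trivial skew braces left and right nilpotency coincide with nilpotency of the underlying group, and your induction checks out.
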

\begin{proposition}\label{prop: left and right nilpotency coincide weakly trivial}
Let $A$ be a weakly trivial skew brace, then the following are equivalent:
\begin{enumerate}
    \item $A$ is left nilpotent,
    \item $A$ is right nilpotent,
    \item $A$ is strongly nilpotent.
\end{enumerate}
\end{proposition}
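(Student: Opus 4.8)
The plan is to reduce the whole proposition to the single equivalence $(1)\Leftrightarrow(2)$. Indeed, \cref{theorem: left right strong nilpotency} already asserts that strong nilpotency is equivalent to the conjunction of left and right nilpotency. Hence, once $(1)\Leftrightarrow(2)$ is established, any one of the three conditions forces the other two: if $A$ is left nilpotent it is also right nilpotent, so both hold and $A$ is strongly nilpotent by \cref{theorem: left right strong nilpotency}; the case of right nilpotency is symmetric; and strong nilpotency returns both $(1)$ and $(2)$ by the same theorem. So the real content is the equivalence of left and right nilpotency.

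To prove $(1)\Leftrightarrow(2)$, I would invoke \cref{prop: weakly is subdirect product} and identify $A$ with its image in $T\times S$, where $T=A/A^2$ is trivial and $S=A/A_\op^2$ is almost trivial. The key observation is that, since $T$ is trivial, the $*$-operation on $T\times S$ ignores the first coordinate: $(t,s)*(t',s')=(0,s*s')$. Combining this with the surjectivity of the projection $A\to S$ and the base identity $A^2=\{0\}\times S^2$ already obtained in the proof of \cref{lem: A fits in pullback diagram}, a routine induction should give $A^n=\{0\}\times S^n$ and $A^{(n)}=\{0\}\times S^{(n)}$ for all $n\geq 2$. Note that these subgroups genuinely lie in $A$, since $S^n\subseteq S^2$ and $S^{(n)}\subseteq S^2$, so $\{0\}\times S^n\subseteq\{0\}\times S^2=A^2\subseteq A$. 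Consequently $A$ is left (respectively right) nilpotent if and only if $S$ is.

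It then remains to handle the almost trivial skew brace $S=\opTriv(G)$ with $G=(S,\circ)$. Using the formula $g*h=h^{-1}ghg^{-1}$, a short commutator computation should show that the left and right series of $S$ both coincide with the lower central series $\gamma_n(G)$: one has $S^2=S^{(2)}=[G,G]=\gamma_2(G)$, and inductively the generating products $a*h$ and $h*a$ with $h\in\gamma_n(G)$ range over commutators generating $[\gamma_n(G),G]=\gamma_{n+1}(G)$, so $S^n=S^{(n)}=\gamma_n(G)$. As the two series are literally equal, $S$ is left nilpotent exactly when it is right nilpotent; transporting this back to $A$ via the previous paragraph yields $(1)\Leftrightarrow(2)$ and closes the argument.

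The step I expect to be the main obstacle is the bookkeeping in the inductive computation of the series inside $T\times S$: one must verify that in the products $(t,s)*(0,w)$ and $(0,w)*(t,s)$ the coordinate $s$ truly ranges over all of $S$, which is precisely where the subdirectness hypothesis enters, so that $A^n$ captures the full subgroup $S^n$ rather than a proper subgroup. The commutator identifications for $\opTriv(G)$ are elementary but require attention to the direction of conjugation implicit in the definition of $*$.
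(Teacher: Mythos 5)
Your proposal is correct and takes essentially the same route as the paper: reduce condition (3) to (1)+(2) via \cref{theorem: left right strong nilpotency}, pass through the subdirect embedding $A\to A/A^2\times A/A^2_\op$ of \cref{prop: weakly is subdirect product}, and use that left and right nilpotency coincide for almost trivial skew braces. The only difference is one of granularity: you explicitly verify the identifications $A^n=\{0\}\times S^n$, $A^{(n)}=\{0\}\times S^{(n)}$ (for $n\geq 2$, using subdirectness) and $S^n=S^{(n)}=\gamma_n(G)$ for $S=\opTriv(G)$, whereas the paper instead quietly invokes closure of left/right nilpotency under quotients, direct products and subskew braces and simply asserts the almost trivial case, so your computations (which are correct, including the commutator identification $a*b=b^{-1}aba^{-1}$ giving $[\gamma_n(G),G]=\gamma_{n+1}(G)$) amount to filling in details the paper leaves to the reader rather than a genuinely different argument.
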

\begin{proof}
For any skew brace, 3 holds whenever 1 and 2 hold by \cref{theorem: left right strong nilpotency}. Therefore, only the equivalence of 1 and 2 has to be proved.

Assume that $A$ is left nilpotent. Then the almost trivial skew brace $A/A_\op^2$ is left nilpotent, hence right nilpotent. As $A/A^2$ is trivial so right nilpotent, it follows that $A/A^2\times A/A^2_\op$, and therefore also its subskew brace $A$, is right nilpotent. The same argument can be used to show that right nilpotency implies left nilpotency.
\end{proof}

\section{Two-sided skew braces}\label{section: twosided skew braces}
We start this section by proving our main results, \cref{theorem: A two-sided additive group intersection A^2 Aop^2 abelian} and \cref{cor: extension weakly trivial by brace}. For this we need the following lemma.
\begin{lemma}\label{lem: twosided skew brace additive commutator A*A A*opA is zero}
Let $A$ be a two-sided skew brace then $A^2$ and $A_\op^2$ centralize one another in $(A,+)$.
\end{lemma}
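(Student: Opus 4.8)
The plan is to reduce the statement to generators and then exploit \emph{both} distributive laws. By definition $A^2=A*A$ and $A_\op^2=A*_\op A$ are the additive subgroups of $(A,+)$ generated by the elements $a*b$ and $c*_\op d$ respectively, and the relation ``commutes additively with'' is preserved under additive products and inverses. Hence it suffices to show that a single generator $a*b$ commutes in $(A,+)$ with a single generator $c*_\op d$, for all $a,b,c,d\in A$.

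Before the computation I would record one structural observation that isolates the content of the statement. Both $A^2$ and $A_\op^2$ are ideals of $A$, hence normal subgroups of $(A,+)$; consequently, for $u\in A^2$ and $v\in A_\op^2$ the additive commutator $u+v-u-v$ lies in $A^2\cap A_\op^2$ (indeed $u+v-u\in A_\op^2$, while $u+(v-u-v)\in A^2$ since $v-u-v$ is a conjugate of $-u$). In particular the additive centralizer $C$ of $A_\op^2$ is itself normal in $(A,+)$, so in proving $A^2\subseteq C$ I may replace each generator $a*b$ by whichever additive conjugate of it is most convenient (e.g.\ $-b+\lambda_a(b)$ rather than $\lambda_a(b)-b$). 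Note also that if $(A,+)$ is abelian, i.e.\ $A$ is a brace, then $*=*_\op$, $A^2=A_\op^2$, and the statement is trivial; so the entire content lives in the non-abelian additive interaction, and the proof must invoke a relation sensitive to it.

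That relation is two-sidedness. I would finish by a direct computation using all four identities of \cref{lem: distributivity skew} (all of which hold since $A$ is two-sided), together with the opposite-brace description $\lambda^\op_a(b)=a+\lambda_a(b)-a$, equivalently the fact from \cref{prop: twosided iff mult conj additive automorphism} that every $\circ$-conjugation is an additive automorphism. Writing $a*b=\lambda_a(b)-b$ and $c*_\op d=-d+\bigl(c+\lambda_c(d)-c\bigr)$, I would transform $(a*b)+(c*_\op d)$ into $(c*_\op d)+(a*b)$ by pushing the $\lambda$'s across the additions with the left identities and then resolving the resulting conjugation terms with the right identity. Since $A$ two-sided forces $A_\op$ two-sided as well, and the claim is symmetric under $A\leftrightarrow A_\op$ (which interchanges $A^2$ and $A_\op^2$), this symmetry can be used to halve the bookkeeping.

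The main obstacle is precisely this bookkeeping in the non-abelian group $(A,+)$. The left-brace law alone only moves $\lambda_a$ across a sum at the cost of extra conjugations, and the discrepancy between $(a*b)+(c*_\op d)$ and $(c*_\op d)+(a*b)$ is, a priori, a product of such conjugation terms lying only in $A^2\cap A_\op^2$. The right-brace identity (in the form $(x+y)*z=-y+x*z+y+y*z$ and its opposite) is exactly what forces these discrepancies to cancel; the whole point is that two-sidedness supplies enough relations to collapse the commutator, already known to sit inside $A^2\cap A_\op^2$, all the way down to $0$.
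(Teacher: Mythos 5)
Your reduction to generators is fine, and your structural remarks are correct: $A^2$ and $A_\op^2$ are ideals, hence additively normal, and the additive commutator of $u\in A^2$ with $v\in A_\op^2$ lands in $A^2\cap A_\op^2$. But the proof stops exactly at its crux. The entire content of the lemma is the single identity $(a*d)+(b*_\op c)=(b*_\op c)+(a*d)$ for all $a,b,c,d\in A$, and you never establish it: ``pushing the $\lambda$'s across the additions with the left identities and then resolving the resulting conjugation terms with the right identity'' describes an intended computation without performing it, and your closing claim that the right-brace identity ``is exactly what forces these discrepancies to cancel'' asserts the conclusion rather than deriving it. Moreover, the observation that the discrepancy lies in $A^2\cap A_\op^2$ gives no leverage on its own: that intersection is in general a nonzero two-sided brace (this is precisely \cref{theorem: A two-sided additive group intersection A^2 Aop^2 abelian}, which in the paper is a \emph{consequence} of this lemma and so cannot be invoked here), so membership in it does not collapse anything to $0$. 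It is also not evident that the four identities of \cref{lem: distributivity skew}, applied formally as you describe, terminate in the desired cancellation; as written, the proposal is a strategy memo with its one nontrivial step missing.

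For comparison, the paper proves the generator identity by a short trick your sketch circles around but never lands on: expand $(a+b)\circ(c+d)$ twice, once applying the left brace law first and the right law second, and once in the other order. The first order gives $a\circ c+b*_\op c+a*d+b\circ d$, the second gives $a\circ c+a*d+b*_\op c+b\circ d$; cancelling $a\circ c$ on the left and $b\circ d$ on the right yields exactly $b*_\op c+a*d=a*d+b*_\op c$. To repair your write-up, replace your third paragraph by this (or an equally explicit) computation; once the generators are shown to commute, the extension to the generated subgroups is immediate, and most of your remaining scaffolding (normality of the centralizer, passing to conjugate generators, the $A\leftrightarrow A_\op$ symmetry) becomes unnecessary.
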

\begin{proof}
Let $a,b,c,d\in A$. Using consequently the fact that $A$ is a left and right skew brace
\begin{align*}
    (a+b)\circ(c+d)&=(a+b)\circ c-(a+b)+(a+b)\circ d\\
    &=a\circ c-c+b\circ c-b-a+a\circ d-d+b\circ d\\
    &=a\circ c+b*_\op c+a*d+b\circ d.
\end{align*}
If we start by using the right skew brace structure followed by the left one then we find
\begin{align*}
    (a+b)\circ(c+d)&=a\circ (c+d)-(c+d)+b\circ (c+d)\\
    &=a\circ c-a+a\circ d-d-c+b\circ c-b+b\circ d\\
    &=a\circ c+a*d+b*_\op c+b\circ d.
\end{align*}
Comparing both calculations we find $ b*_\op c+a*d=a*d+b*_\op c$, from which the statement follows.
\end{proof}
\begin{remark}
Although this result is new, a similar calculation appeared in \cite[Lemma 4.5]{Nas19} to prove that $A*Z(A,\circ)+Z(A,\circ)*A$ has an abelian additive group.
\end{remark}

\begin{theorem}\label{theorem: A two-sided additive group intersection A^2 Aop^2 abelian}
Let $A$ be a two-sided skew brace, then $A^2\cap A_\op^2$ is contained in $Z(A^2+A_\op^2,+)$. In particular, $A^2\cap A^2_\op$ is a two-sided brace.
\end{theorem}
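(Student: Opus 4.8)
The plan is to deduce the entire statement from the centralizing property just established in \cref{lem: twosided skew brace additive commutator A*A A*opA is zero}. For the first assertion, I would take an arbitrary $z \in A^2 \cap A_\op^2$ and argue that it commutes additively with every element of $A^2 + A_\op^2$. The key observation is that $z$ lies in \emph{both} factors: since $z \in A_\op^2$, the lemma gives that $z$ commutes with each element of $A^2$, and since $z \in A^2$, the lemma gives that $z$ commutes with each element of $A_\op^2$. Thus $z$ centralizes the generating set $A^2 \cup A_\op^2$ of the additive subgroup $A^2 + A_\op^2$, and therefore centralizes the whole subgroup. This is exactly the containment $z \in Z(A^2 + A_\op^2, +)$.

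For the \emph{in particular} part, I would first record that $A^2$ is an ideal of $A$ and $A_\op^2$ is an ideal of $A_\op$ and hence of $A$, so that their intersection $B := A^2 \cap A_\op^2$ is again an ideal, in particular a subskew brace. The first part shows $B \subseteq Z(A^2 + A_\op^2, +)$; being a subgroup of an abelian group, $(B,+)$ is abelian, so $B$ is a brace. It then remains only to see that $B$ is two-sided, and here I would simply note that the right brace identity $(a+b)\circ c = a\circ c - c + b\circ c$ is a universally quantified identity holding throughout $A$ by hypothesis; restricting it to $B$, which is closed under $+$, $\circ$ and the additive inverse, it continues to hold. Hence $B$ is a two-sided brace.

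I do not expect a genuine obstacle here, since the real content has already been absorbed into \cref{lem: twosided skew brace additive commutator A*A A*opA is zero}. The only points requiring a little care are formal ones: that a subgroup of the abelian center is abelian, that an intersection of ideals is again an ideal, and that two-sidedness---being expressible as an identity---passes automatically to subskew braces. The one mild subtlety worth handling cleanly is that an element of $A^2 + A_\op^2$ need not \emph{a priori} be a single sum $x + y$ with $x \in A^2$ and $y \in A_\op^2$; invoking centralization at the level of \emph{generators} rather than of individual elements sidesteps this and keeps the argument short.
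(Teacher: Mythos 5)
Your proposal is correct and follows essentially the same route as the paper: both deduce from \cref{lem: twosided skew brace additive commutator A*A A*opA is zero} that an element of $A^2\cap A_\op^2$ centralizes both $A^2$ and $A_\op^2$, hence lies in $Z(A^2+A_\op^2,+)$, and then conclude that $A^2\cap A_\op^2$ is an abelian-type subskew brace inheriting two-sidedness. Your extra remarks (centralizing generators rather than arbitrary sums, intersection of ideals being an ideal, two-sidedness restricting as an identity) merely spell out details the paper leaves implicit.
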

\begin{proof}
Using \cref{lem: twosided skew brace additive commutator A*A A*opA is zero} we find
that $A^2\cap A^2_\op$ is in the center of both $(A^2,+)$ and $(A^2_\op,+)$, so it is contained in the center of $(A^2+A^2_\op,+)$. In particular, the commutativity of $(A^2\cap A^2_\op,+)$ follows.
\end{proof}
\begin{corollary}\label{cor: extension weakly trivial by brace}
Every two-sided skew brace is the extension of a weakly trivial skew brace by a two-sided left brace.
\end{corollary}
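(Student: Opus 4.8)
The plan is to take $I=A^2\cap A^2_\op$ as the kernel of the extension and $A/I$ as the weakly trivial quotient, so that essentially all the work has already been done in \cref{theorem: A two-sided additive group intersection A^2 Aop^2 abelian} and \cref{prop: A/A^2 cap A^2op is weakly trivial}; the corollary is then just a matter of assembling these two facts and checking the relevant bookkeeping. First I would record that $I$ is an ideal of $A$. Indeed, $A^2=A*A$ is an ideal of $A$, while $A^2_\op=A*_\op A$ is an ideal of $A_\op$ and hence of $A$, since the ideals of $A$ and $A_\op$ coincide. As the intersection of two ideals is again an ideal, $I=A^2\cap A^2_\op$ is an ideal of $A$, and in particular the quotient skew brace $A/I$ is defined.

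Next I would identify the two pieces. The quotient is handled by \cref{prop: A/A^2 cap A^2op is weakly trivial}, which holds for every skew brace and states precisely that $A/I=A/(A^2\cap A^2_\op)$ is weakly trivial. The kernel is handled by the theorem just proved: since $A$ is two-sided, $I$ is contained in $Z(A^2+A^2_\op,+)$, so $(I,+)$ is abelian and $I$ is of abelian type; being a subskew brace of the two-sided skew brace $A$, it also inherits both brace identities, so $I$ is a two-sided (left) brace. This is exactly the conclusion already packaged in the final sentence of \cref{theorem: A two-sided additive group intersection A^2 Aop^2 abelian}.

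Putting these together, the short exact sequence $0\to I\to A\to A/I\to 0$ exhibits $A$ as an extension of the weakly trivial skew brace $A/I$ by the two-sided brace $I$, which is what the statement asserts.

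I do not expect a genuine obstacle in this argument, since all of the substance is contained in the preceding theorem and proposition. The only points requiring care are the verification that $I$ is an ideal (so that the quotient $A/I$ is a bona fide skew brace) and matching the chosen roles of kernel and quotient to the convention implicit in the phrase \emph{extension of a weakly trivial skew brace by a two-sided left brace}; both are routine.
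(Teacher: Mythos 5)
Your proof is correct and follows exactly the paper's argument: the paper likewise takes $I=A^2\cap A^2_\op$, cites \cref{theorem: A two-sided additive group intersection A^2 Aop^2 abelian} for $I$ being a two-sided brace, and \cref{prop: A/A^2 cap A^2op is weakly trivial} for $A/I$ being weakly trivial. Your extra check that $I$ is an ideal (as an intersection of ideals, using that ideals of $A$ and $A_\op$ coincide) is sound bookkeeping that the paper leaves implicit.
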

\begin{proof}
It suffices to note that the ideal $A^2\cap A_\op^2$ is a two-sided brace by \cref{theorem: A two-sided additive group intersection A^2 Aop^2 abelian} and $A/(A^2\cap A_\op^2)$ is a weakly trivial skew brace by \cref{prop: A/A^2 cap A^2op is weakly trivial}.
\end{proof}
On the other hand, it is not generally true that any extension of a weakly trivial skew brace by a two-sided brace is a two-sided skew brace, as we now demonstrate.
\begin{example}
Let $A$ be the semidirect product $\Triv(C_2)\ltimes \Triv(C_3)$, where $\Triv(C_2)$ acts non-trivially; see \cite[Corollary 3.36]{SV18}. Then $A$ is clearly an extension of the weakly trivial skew brace $\Triv(C_2)$ by the two-sided brace $\Triv(C_3)$. However, $C_2\times \{0\}$ is a characteristic subgroup of $(A,+)\cong C_2\times C_3$ but not a normal subgroup of $(A,\circ)\cong C_2\ltimes C_3$. It follows that $A$ can not be two-sided, as this would contradict \cref{cor: char is ideal}.
\end{example}
\begin{definition}
A skew brace $A$ is \emph{simple} if the only ideals are $\{0\}$ and $A$.
\end{definition}

\begin{theorem}\label{theorem: simple two-sided skew braces}
Let $A$ be a simple two-sided skew brace, then one of the following holds
\begin{enumerate}
    \item $A\cong \Triv(G)$ for a simple group $G$,
    \item $A\cong \opTriv(G)$ for a simple group $G$,
    \item $A$ is a simple two-sided brace.
\end{enumerate}
\end{theorem}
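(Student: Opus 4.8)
The plan is to let simplicity collapse the ideal $A^2\cap A^2_\op$ and then read off the three cases from what happens to $A^2$ and $A^2_\op$. First I would record that $A^2$ and $A^2_\op$ are both ideals of $A$ (as noted in the preliminaries), so their intersection $A^2\cap A^2_\op$ is again an ideal. Since $A$ is simple, and in particular $A\neq\{0\}$, this intersection is either $\{0\}$ or $A$, and I would split along this dichotomy.

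If $A^2\cap A^2_\op=A$, then by \cref{theorem: A two-sided additive group intersection A^2 Aop^2 abelian} the skew brace $A=A^2\cap A^2_\op$ is a two-sided brace, i.e.\ it has abelian additive group. As $A$ is simple by hypothesis, this is exactly conclusion (3), and nothing further is needed in this branch.

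It then remains to treat the weakly trivial case $A^2\cap A^2_\op=\{0\}$, where I would apply simplicity a second time, now to the ideal $A^2$. If $A^2=\{0\}$, then $a*b=0$ for all $a,b$, i.e.\ $a\circ b=a+b$, so $A=\Triv(G)$ with $G=(A,+)$. Since ideals of $A$ coincide with ideals of $A_\op=\opTriv(G)$, which are precisely the normal subgroups of $G$, simplicity of $A$ is equivalent to simplicity of $G$, giving conclusion (1). If instead $A^2=A$, then $A^2_\op=A\cap A^2_\op=A^2\cap A^2_\op=\{0\}$, so $A_\op$ is trivial and $A=\opTriv(H)$ with $H=(A,\circ)$; the ideals of $A$ are again the normal subgroups of $H$, so simplicity of $A$ is equivalent to simplicity of $H$, giving conclusion (2). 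Since $A^2\in\{\{0\},A\}$ by simplicity, these two subcases are exhaustive.

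I expect no genuine obstacle here: all the real content is already packaged in \cref{theorem: A two-sided additive group intersection A^2 Aop^2 abelian} together with the description of the ideals of trivial and almost trivial skew braces. The only point requiring a moment's care is organizing the nested case split correctly and observing that, within the weakly trivial case, $A^2=A$ forces $A^2_\op=\{0\}$ (and dually), which is what funnels $A$ into the almost trivial conclusion rather than leaving an unaccounted-for possibility.
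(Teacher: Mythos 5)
Your proof is correct and takes essentially the same route as the paper: both arguments use simplicity to force the ideals $A^2$ and $A^2_\op$ to be $\{0\}$ or $A$, identify ideals of trivial and almost trivial skew braces with normal subgroups of the underlying group, and derive abelianness of $(A,+)$ in the remaining case from the centralizing result (the paper cites \cref{lem: twosided skew brace additive commutator A*A A*opA is zero} directly, whereas you invoke its consequence \cref{theorem: A two-sided additive group intersection A^2 Aop^2 abelian}). Your reorganization of the case analysis around $A^2\cap A^2_\op$, with the observation that $A^2=A$ forces $A^2_\op=\{0\}$ in the weakly trivial case, is a cosmetic rearrangement of the paper's three cases, not a different method.
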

\begin{proof}
Let $A$ be a simple two-sided skew brace. If $A^2=\{0\}$, this means that $A=\Triv(G)$ for some group $G$. As the ideals of $A$ are precisely normal subgroups of $G$, we conclude that $G$ is simple. If $A_\op^2=\{0\}$ then the same reasoning yields that $A\cong \opTriv(G)$ for some simple group $G$. The only case which remains is the one where $A^2=A_\op^2=A$. By \cref{lem: twosided skew brace additive commutator A*A A*opA is zero} this implies that $(A,+)$ is abelian. Hence, $A$ is a simple two-sided brace.  
\end{proof}
\begin{corollary}
Let $A$ be a finite simple two-sided skew brace, then either $A\cong \Triv(G)$ or $A\cong \opTriv(G)$ for some finite simple group $G$.
\end{corollary}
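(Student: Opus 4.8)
The plan is to invoke \cref{theorem: simple two-sided skew braces} and show that, in the finite setting, its third alternative collapses into the first. Let $A$ be a finite simple two-sided skew brace; we may assume $A\neq\{0\}$. The theorem leaves three possibilities. In cases (1) and (2) we already have $A\cong\Triv(G)$ or $A\cong\opTriv(G)$ for a simple group $G$, and since $A$ is finite the group $G$ is finite, so these cases give exactly the desired conclusion with nothing further to check. It therefore remains to analyze case (3), where $A$ is a simple two-sided brace, and to show that such a finite $A$ is again of the form $\Triv(G)$ with $G$ finite simple.

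The key point is that a finite two-sided brace cannot be a nontrivial simple brace. Under the correspondence recalled in the preliminaries, a two-sided brace is a Jacobson radical ring with ring multiplication given by $*$; since $A$ is finite, this is a finite Jacobson radical ring, hence nilpotent, because the Jacobson radical of a finite (more generally Artinian) ring is nilpotent and a Jacobson radical ring coincides with its own radical. Equivalently, $A$ is a right nilpotent two-sided brace. Now $A^2=A*A$ is an ideal of $A$ by \cite{Cedo2019}, so simplicity forces $A^2=\{0\}$ or $A^2=A$. The equality $A^2=A$ would make every term of the right series equal to $A$, contradicting right nilpotency of the nonzero brace $A$. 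Hence $A^2=\{0\}$, which means $a\circ b=a+b$ for all $a,b$, i.e.\ $A=\Triv(G)$ with $G=(A,+)$ abelian. For a trivial brace of abelian type the $\lambda$-maps are the identity, so every subgroup of $G$ is an ideal; simplicity then forces $G$ to have no proper nontrivial subgroups, and being abelian and finite it must be cyclic of prime order, $G\cong C_p$. Thus $A\cong\Triv(C_p)$ with $C_p$ finite simple, which is an instance of case (1), completing the reduction.

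The only genuinely nontrivial ingredient is the nilpotency of finite Jacobson radical rings; once that is available, the rest is a short deduction from simplicity together with the fact that $A^2$ is an ideal. I expect this to be the sole obstacle, and it is a classical ring-theoretic fact rather than something requiring new work, so the corollary follows quickly from \cref{theorem: simple two-sided skew braces}.
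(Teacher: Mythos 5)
Your proposal is correct and follows essentially the same route as the paper: both reduce to \cref{theorem: simple two-sided skew braces} and eliminate its third alternative using the classical fact that finite Jacobson radical rings are nilpotent. The only difference is one of detail: the paper tersely states that no finite simple radical ring exists, whereas you spell out that nilpotency plus simplicity forces $A^2=\{0\}$ and hence $A\cong\Triv(C_p)$, a degenerate subcase the paper leaves implicit (and which falls under case (1) anyway).
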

\begin{proof}
It is well-known that if $A$ is a finite two-sided brace, then $A$ is strongly nilpotent and thus $A^2\neq A$. If furthermore, $A$ is simple it quickly follows that $A\cong \Triv(G)$ for some abelian group $G$. If we combine this observation with \cref{theorem: simple two-sided skew braces}, the statement follows.
\end{proof}
\begin{remark}
In \cite[Corollary 4.2]{Nas19} it was already proved that every simple finite two-sided skew brace of soluble type is trivial.
\end{remark}
\begin{remark}
    Note that there exist simple non-trivial Jacobson radical rings, and therefore also simple non-trivial two-sided braces. The first such example was constructed in \cite{SC67}.
\end{remark}
\begin{theorem}\label{theorem: two-sided skew brace circ soluble then + soluble}
Let $A$ be a two-sided skew brace. If $(A,\circ)$ is soluble of derived length $n$, then $(A,+)$ is soluble of derived length at most $n+1$. 
\end{theorem}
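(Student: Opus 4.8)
The plan is to exploit the extension structure established in \cref{cor: extension weakly trivial by brace}, reducing the statement to the weakly trivial quotient, where \cref{cor: derived length weakly trivial} lets us pass between the multiplicative and additive pictures. Write $N=A^2\cap A^2_\op$. This is an ideal of $A$, being the intersection of the ideals $A^2$ and $A^2_\op$, so in particular it is a normal subgroup of $(A,\circ)$ and the quotient skew brace $A/N$ is well defined. By \cref{theorem: A two-sided additive group intersection A^2 Aop^2 abelian} the group $(N,+)$ is abelian, and by \cref{prop: A/A^2 cap A^2op is weakly trivial} the quotient $A/N$ is weakly trivial.

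First I would transfer the hypothesis to the quotient. Since $N$ is normal in $(A,\circ)$, we have $(A/N,\circ)\cong(A,\circ)/(N,\circ)$ as groups, and a quotient of a soluble group of class $n$ is soluble of class at most $n$; thus $(A/N,\circ)$ is soluble of class at most $n$. Now the crucial step: because $A/N$ is weakly trivial, \cref{cor: derived length weakly trivial} applies and tells us that $(A/N,+)$ is soluble, with derived length equal to that of $(A/N,\circ)$. Hence $(A/N,+)$ is soluble of class at most $n$, i.e. the $n$-th derived subgroup of $(A,+)$ lands in $N$.

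Finally I would run the standard extension estimate on additive groups. From $(A/N,+)^{(n)}=\{0\}$ we get $(A,+)^{(n)}\subseteq (N,+)$, and since $(N,+)$ is abelian,
\[
(A,+)^{(n+1)}=\bigl[(A,+)^{(n)},(A,+)^{(n)}\bigr]\subseteq [N,N]_+=\{0\},
\]
so $(A,+)$ is soluble of class at most $n+1$, as desired.

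I do not expect a genuine obstacle here: the argument is a clean reduction, and every ingredient is already available. The one point that carries the real content — and which must be invoked exactly — is \cref{cor: derived length weakly trivial}, since this is what converts solubility of the multiplicative group of the weakly trivial quotient into solubility of its additive group with the same derived length; without the weak triviality of $A/N$ there would be no such comparison, and the $+1$ in the bound is precisely the cost of the abelian kernel $N$.
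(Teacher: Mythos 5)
Your proof is correct and takes essentially the same route as the paper: both pass to the weakly trivial quotient $A/(A^2\cap A^2_\op)$, use \cref{cor: derived length weakly trivial} to transfer solubility of class at most $n$ from $(A/(A^2\cap A^2_\op),\circ)$ to $(A/(A^2\cap A^2_\op),+)$, and then invoke the abelianness of $(A^2\cap A^2_\op,+)$ from \cref{theorem: A two-sided additive group intersection A^2 Aop^2 abelian} to get the bound $n+1$. You simply make explicit the routine extension estimate on derived subgroups that the paper leaves implicit.
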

\begin{proof}
By \cref{prop: A/A^2 cap A^2op is weakly trivial}, $A/(A^2\cap A_\op^2)$ is weakly trivial. Because $(A/(A^2\cap A_\op^2),\circ)$ has derived length at most $n$, it follows from \cref{cor: derived length weakly trivial} that $(A/(A^2\cap A_\op^2),+)$ has derived length at most $n$. We also know that $(A^2\cap A_\op^2,+)$ is abelian by \cref{theorem: A two-sided additive group intersection A^2 Aop^2 abelian}, from which we conclude that $(A,+)$ has derived length at most $n+1$. 
\end{proof}
\begin{remark}
    \cref{theorem: two-sided skew brace circ soluble then + soluble} is a refinement of {\cite[Theorem 4.6]{Nas19}}, where nilpotency instead of solubility of the multiplicative group was assumed and the upper bound was $2n$.
\end{remark}
\begin{lemma}\label{lem: circ nilpotent two-sided then A^2+A_op^2 +nilpotent}
Let $A$ be a two-sided skew brace. If $(A,\circ)$ is nilpotent of class $n$, then $(A^2+A_\op^2,+)$ is nilpotent of class at most $n+1$.
\end{lemma}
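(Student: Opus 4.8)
The plan is to mirror the solubility argument of \cref{theorem: two-sided skew brace circ soluble then + soluble}, but to take care to work inside $B:=A^2+A_\op^2$ rather than inside all of $A$. Write $Z:=A^2\cap A_\op^2$, which is an ideal of $A$ since it is the intersection of the two ideals $A^2$ and $A_\op^2$. By \cref{theorem: A two-sided additive group intersection A^2 Aop^2 abelian} we have the crucial containment $Z\subseteq Z(B,+)$; note that this is only a statement about centrality inside $B$, and in general $Z$ need not be central in all of $(A,+)$, which is precisely why the conclusion is phrased for $B$ and not for $A$.

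First I would pass to the quotient $C:=A/Z$, which is weakly trivial by \cref{prop: A/A^2 cap A^2op is weakly trivial}. Since $(A,\circ)$ is nilpotent of class $n$ and $Z$ is an ideal, the quotient group $(C,\circ)=(A,\circ)/Z$ is nilpotent of class at most $n$. Because $C$ is weakly trivial, its additive and multiplicative nilpotency classes coincide (the nilpotency counterpart of \cref{cor: derived length weakly trivial}), so $(C,+)$ is nilpotent of class at most $n$ as well.

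Next, I would observe that, using $Z\subseteq B\subseteq A$, the group $(B/Z,+)$ is just the additive subgroup $\{b+Z\mid b\in B\}$ of $(C,+)=(A/Z,+)$. As a subgroup of a nilpotent group of class at most $n$, it is itself nilpotent of class at most $n$. Finally, $B$ is a central extension of $B/Z$ by the central subgroup $Z$: since $(B/Z,+)$ has class at most $n$ we get $\gamma_{n+1}(B,+)\subseteq Z$, and as $Z$ is central in $(B,+)$ this forces $\gamma_{n+2}(B,+)=\{0\}$, that is, $(B,+)$ is nilpotent of class at most $n+1$.

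The only genuinely delicate point is the restriction to $B$: the centralizing information supplied by \cref{theorem: A two-sided additive group intersection A^2 Aop^2 abelian} only places $Z$ in the center of $(A^2+A_\op^2,+)$, so the central-extension bookkeeping that produces the extra ``$+1$'' goes through for $B$ but not for the full additive group. Everything else is routine: quotients and subgroups of nilpotent groups are nilpotent of no larger class, and the transfer of nilpotency between the additive and multiplicative structures of the weakly trivial brace $C$ is exactly the content already established for weakly trivial skew braces.
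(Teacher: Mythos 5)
Your proof is correct and essentially the same as the paper's: both hinge on the weakly trivial quotient plus the central extension by $A^2\cap A_\op^2$ inside $(A^2+A_\op^2,+)$ supplied by \cref{theorem: A two-sided additive group intersection A^2 Aop^2 abelian}. The only (immaterial) difference is that you apply the nilpotency transfer for weakly trivial skew braces to all of $A/(A^2\cap A_\op^2)$ and then restrict to the additive subgroup $(A^2+A_\op^2)/(A^2\cap A_\op^2)$, whereas the paper applies it directly to that subskew brace.
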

\begin{proof}
As $(A^2+A_\op^2)/(A^2\cap A_\op^2)$ is weakly trivial and $((A^2+A_\op^2)/(A^2\cap A_\op^2),\circ)$ is nilpotent of class at most $n$, also $((A^2+A_\op^2)/(A^2\cap A_\op^2),+)$ is nilpotent of class at most $n$ by \cref{cor: derived length weakly trivial}. By \cref{theorem: A two-sided additive group intersection A^2 Aop^2 abelian}, we have $A^2\cap A_\op^2\subseteq Z(A^2+A_\op^2,+)$, so we conclude that $(A^2+A_\op^2,+)$ is nilpotent of class at most $n+1$.
\end{proof}
\begin{theorem}
Let $A$ be a two-sided skew brace with nilpotent multiplicative group. Then its additive group is abelian-by-nilpotent and nilpotent-by-abelian.
\end{theorem}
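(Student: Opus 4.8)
The plan is to prove each of the two assertions by exhibiting a concrete normal subgroup of $(A,+)$ with the required quotient behavior, assembling the structural facts already established for the ideals $A^2+A_\op^2$ and $A^2\cap A_\op^2$. Throughout, write $n$ for the nilpotency class of $(A,\circ)$.

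For the nilpotent-by-abelian statement I would take $M=A^2+A_\op^2$. First I would record that $M$ is an ideal: it is the sum of the ideals $A^2$ and $A_\op^2$, hence a left ideal (as each $\lambda_a$ is additive) and normal in $(A,+)$; normality in $(A,\circ)$ follows since, by \cref{prop: twosided iff mult conj additive automorphism}, every inner automorphism of $(A,\circ)$ is an additive automorphism stabilizing both $A^2$ and $A_\op^2$, hence their sum. By \cref{lem: circ nilpotent two-sided then A^2+A_op^2 +nilpotent}, $(M,+)$ is nilpotent of class at most $n+1$. It then remains to see that $(A/M,+)$ is abelian. Since $A^2\subseteq M$ and $A_\op^2\subseteq M$, we have $(A/M)^2=(A/M)_\op^2=\{0\}$; the first equality gives $a+b=a\circ b$ and the second gives $a+_\op b=a\circ b$ on $A/M$, so that $b+a=a\circ b=a+b$ for all $a,b\in A/M$. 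Thus $(A/M,+)$ is abelian, and $M$ witnesses that $(A,+)$ is nilpotent-by-abelian.

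For the abelian-by-nilpotent statement I would take $N=A^2\cap A_\op^2$, which is an ideal as an intersection of ideals and hence normal in $(A,+)$. By \cref{theorem: A two-sided additive group intersection A^2 Aop^2 abelian}, $N$ is a two-sided brace, so $(N,+)$ is abelian. By \cref{prop: A/A^2 cap A^2op is weakly trivial}, $A/N$ is weakly trivial, and its multiplicative group $(A/N,\circ)$ is nilpotent, being a quotient of the nilpotent group $(A,\circ)$. The corollary following \cref{cor: derived length weakly trivial} (the nilpotent analogue) then yields that $(A/N,+)$ is nilpotent. Hence $N$ is an abelian normal subgroup of $(A,+)$ with nilpotent quotient, which shows that $(A,+)$ is abelian-by-nilpotent.

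The substance of the argument is entirely carried by the earlier lemmas, so I expect no serious obstacle; the only points I would take care to spell out are the verification that $A^2+A_\op^2$ is genuinely an ideal (so that $A/M$ is a quotient skew brace) and the observation that the doubly-degenerate quotient $A/(A^2+A_\op^2)$, being simultaneously trivial and almost trivial, is forced to have an abelian additive group.
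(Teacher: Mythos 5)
Your proof is correct and follows essentially the same route as the paper: it uses $A^2+A_\op^2$ (nilpotent additive group by \cref{lem: circ nilpotent two-sided then A^2+A_op^2 +nilpotent}, abelian quotient) for the nilpotent-by-abelian part and $A^2\cap A_\op^2$ (abelian by \cref{theorem: A two-sided additive group intersection A^2 Aop^2 abelian}, with weakly trivial quotient handled by the nilpotent analogue of \cref{cor: derived length weakly trivial}) for the abelian-by-nilpotent part. The only difference is that you spell out details the paper leaves implicit, such as $A^2+A_\op^2$ being an ideal and the quotient $A/(A^2+A_\op^2)$, being simultaneously trivial and almost trivial, having abelian additive group.
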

\begin{proof}
As $(A/(A^2+A_\op^2),+)$ is abelian, the first claim follows by \cref{lem: circ nilpotent two-sided then A^2+A_op^2 +nilpotent}. For the second claim, recall from \cref{theorem: A two-sided additive group intersection A^2 Aop^2 abelian} that $(A^2\cap A_\op^2,+)$ is abelian. It follows from the assumption on $(A,\circ)$, together with \cref{prop: A/A^2 cap A^2op is weakly trivial} and \cref{cor: nilpotency group weakly trivial}, that $(A/(A^2\cap A_\op^2),+)$ is nilpotent.
\end{proof}
Recall the following definition 
\begin{definition}
A group satisfies the \emph{maximum condition on subgroups} if there exists no infinite strictly ascending chain of subgroups.
\end{definition}
It is well-known that the maximum condition on subgroups is preserved under taking subgroups, and forming quotients or extensions; see for example \cite[3.1.7]{Rob82} for a proof of the latter.

The following proposition is a reformulation of the main result of \cite{Wat68}. 
\begin{proposition}\label{prop: watters}
Let $A$ be a two-sided brace, then the following are equivalent:
\begin{enumerate}
    \item $(A,+)$ satisfies the maximum condition on subgroups,
    \item $(A,\circ)$ satisfies the maximum condition on subgroups.
\end{enumerate}
In this case, $A$ is strongly nilpotent and $(A,\circ)$ is nilpotent.
\end{proposition}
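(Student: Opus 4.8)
The plan is to translate the statement into the language of Jacobson radical rings and then invoke the cited result of Watters. Since $A$ is a two-sided brace it is of abelian type, and by the correspondence recalled in the preliminaries it arises from a Jacobson radical ring $R$ with $(R,+)=(A,+)$, adjoint group $(R,\circ)=(A,\circ)$, and ring multiplication given by $*$. Under this dictionary the maximum condition on the subgroups of $(A,+)$ is exactly the maximum condition on the additive group of $R$, and likewise for $(A,\circ)$ and the adjoint group of $R$. Thus items 1 and 2 are word for word the two conditions compared in the main theorem of \cite{Watters1968}, and the equivalence $1\Leftrightarrow 2$ together with the nilpotency of $R$ is precisely its content.

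To see the mechanism behind one implication, I would note that an abelian group satisfies the maximum condition on subgroups if and only if it is finitely generated. Hence if item 1 holds, $R$ is finitely generated as a $\Z$-module, so it is a (left and right) Noetherian ring; as $R$ coincides with its own Jacobson radical, the nilpotency of the Jacobson radical of a Noetherian ring forces $R$ to be nilpotent. The reverse passage, that the maximum condition on the adjoint group already forces it on the additive group, is the substantive part of the argument, and this is exactly what I would import from \cite{Watters1968} rather than reprove.

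It then remains to record the two structural conclusions. First, $R$ being nilpotent means $A$ is nilpotent in the ring sense; since the ring multiplication is associative, the three brace notions of nilpotency coincide with it, so $A$ is strongly nilpotent (equivalently, left and right nilpotent by \cref{theorem: left right strong nilpotency}). Second, the adjoint group of a nilpotent ring is nilpotent: writing $R^{n+1}=0$, the descending chain $R\supseteq R^2\supseteq\cdots\supseteq R^{n+1}=0$ consists of ideals, hence of normal subgroups of $(A,\circ)$, and because $a\circ b=a+b+a*b$ differs from $a+b$ only by a term in a higher power, this chain is a central series for the circle group; chasing group commutators along it yields nilpotency of $(A,\circ)$.

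The only genuine obstacle is bookkeeping: one must check that the hypotheses of Watters' theorem line up exactly with items 1 and 2, in particular that the additive maximum condition and finite generation coincide for the abelian group $(A,+)$ while for the nonabelian group $(A,\circ)$ the maximum condition is kept throughout, and that ring-theoretic nilpotency translates back to the brace notions as above. All real difficulty is confined to Watters' original proof, which is why the result is presented here as a reformulation.
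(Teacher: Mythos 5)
Your overall route coincides with the paper's: the paper gives \cref{prop: watters} no proof of its own, introducing it as ``a reformulation of the main result in \cite{Watters1968}'', and your core move---translate through the two-sided brace/Jacobson radical ring dictionary, match the maximum conditions on $(A,+)$ and $(A,\circ)$ with those on the additive and adjoint groups of $R$, and cite Watters---is exactly that. Your supplementary checks are also sound where they matter: the maximum condition for an abelian group is finite generation; associativity of $*$ identifies ring nilpotency with strong nilpotency (and with left and right nilpotency via \cref{theorem: left right strong nilpotency}); and the chain $R\supseteq R^2\supseteq\cdots\supseteq R^{n+1}=0$ is a central series for $(A,\circ)$, since the $\circ$-commutator of $a\in R^i$ with $b\in R$ lands in $R^{i+1}$, which gives nilpotency of $(A,\circ)$ and, with the factors $R^i/R^{i+1}$ carrying the same group structure under $+$ and $\circ$, also the easy transfer of the maximum condition from $(A,+)$ to $(A,\circ)$ once nilpotency of $R$ is available.

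One claim in your ``mechanism'' paragraph is false as stated, however: the Jacobson radical of a Noetherian ring need not be nilpotent. The localization $\Z_{(p)}$ is a commutative Noetherian domain whose Jacobson radical $p\Z_{(p)}$ is not even nil; moreover $p\Z_{(p)}$ is itself a Jacobson radical ring, so it is precisely the kind of $R$ under discussion (with non-finitely generated additive group, which shows where finite generation must genuinely enter). Consequently the inference ``$(R,+)$ finitely generated $\Rightarrow$ $R$ Noetherian $\Rightarrow$ $R$ nilpotent'' does not go through on the grounds you give: deducing nilpotency of $R$ from the additive maximum condition is itself a substantive part of Watters' theorem, not a consequence of Noetherianity. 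Since you explicitly flag that paragraph as illustrative and rest the actual proof on the citation, whose statement already includes the nilpotency of $R$, the proposal as a whole stands; but the flawed paragraph should be deleted or repaired, for instance by first passing to $R$ modulo its finite torsion subgroup and then giving a genuine argument that a radical ring with free finitely generated additive group is nil, after which Levitzki's theorem (nil one-sided ideals of Noetherian rings are nilpotent) finishes---the nilness step is where the real work, done in \cite{Watters1968}, is hidden.
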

We now generalize the first part of the previous proposition to two-sided skew braces. A generalisation of the second part is given at the end of this section in \cref{theorem: generalisation watters nilpotent type}.
\begin{theorem}\label{theorem: additive iff multiplicative noetherian}
Let $A$ be a two-sided skew brace, then the following are equivalent:
\begin{enumerate}
    \item $(A,+)$ satisfies the maximum condition on subgroups,
    \item $(A,\circ)$ satisfies the maximum condition on subgroups.
\end{enumerate}
\end{theorem}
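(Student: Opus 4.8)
The plan is to reduce everything to the two building blocks supplied by \cref{cor: extension weakly trivial by brace}: the ideal $I := A^2\cap A_\op^2$, which is a two-sided brace by \cref{theorem: A two-sided additive group intersection A^2 Aop^2 abelian}, and the quotient $A/I$, which is weakly trivial by \cref{prop: A/A^2 cap A^2op is weakly trivial}. Recall that a group satisfies the maximum condition on subgroups if and only if every subgroup is finitely generated, and that this property is inherited by subgroups and by quotients and, crucially, is closed under extensions: if $N$ is a normal subgroup of $G$ such that both $N$ and $G/N$ satisfy the maximum condition, then so does $G$. Since $I$ is an ideal it is normal in both $(A,+)$ and $(A,\circ)$, so these three closure properties apply simultaneously to the additive and the multiplicative pictures.

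First I would establish the statement for weakly trivial skew braces as a lemma. Let $B$ be weakly trivial. By \cref{prop: weakly is subdirect product} the canonical embedding realises $(B,+)$ as a subdirect product of $(B/B^2,+)$ and $(B/B_\op^2,+)$, and $(B,\circ)$ as a subdirect product of $(B/B^2,\circ)$ and $(B/B_\op^2,\circ)$. For any subdirect product $G\hookrightarrow G_1\times G_2$ one has that $G$ satisfies the maximum condition if and only if $G_1\times G_2$ does: the forward direction uses that $G_1$ and $G_2$ are full quotients of $G$ together with closure under extensions, and the reverse direction is simply closure under subgroups. Since $B/B^2$ is trivial and $B/B_\op^2$ is almost trivial, the additive and multiplicative groups of each factor are isomorphic, whence $(B/B^2,+)\times (B/B_\op^2,+)\cong (B/B^2,\circ)\times (B/B_\op^2,\circ)$, exactly as used in the proof of \cref{cor: derived length weakly trivial}. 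Chaining these equivalences shows that $(B,+)$ satisfies the maximum condition if and only if $(B,\circ)$ does.

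The theorem then follows by assembling the two blocks. Suppose $(A,+)$ satisfies the maximum condition. Then $(I,+)$ and $(A/I,+)$ inherit it as a subgroup and a quotient respectively; by the weakly trivial case $(A/I,\circ)$ satisfies it, and by \cref{prop: watters} applied to the two-sided brace $I$ the group $(I,\circ)$ satisfies it. As $(A,\circ)$ is an extension of $(A/I,\circ)$ by $(I,\circ)$, closure under extensions yields the maximum condition for $(A,\circ)$. The reverse implication is entirely symmetric: starting from $(A,\circ)$, the weakly trivial case passes the condition to $(A/I,+)$, \cref{prop: watters} passes it from $(I,\circ)$ to $(I,+)$, and the extension closure delivers it to $(A,+)$.

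I expect the only genuine subtlety to be the subdirect-product step in the weakly trivial lemma, where one must correctly invoke that the two projection images are full quotients (so that the maximum condition transfers to them) and identify the additive and multiplicative product groups; everything else is a routine application of the standard closure properties of the maximum condition together with \cref{prop: watters}.
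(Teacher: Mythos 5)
Your proposal is correct and follows essentially the same route as the paper: both prove the statement first for weakly trivial skew braces via the subdirect embedding into $A/A^2\times A/A_\op^2$ (where trivial and almost trivial factors have coinciding additive and multiplicative groups), and then assemble the general case from the ideal $A^2\cap A_\op^2$ (a two-sided brace, handled by \cref{prop: watters}) and the weakly trivial quotient $A/(A^2\cap A_\op^2)$, using closure of the maximum condition under subgroups, quotients and extensions. Your write-up is merely more explicit than the paper's about which closure property is used at each step.
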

\begin{proof}
We first prove the statement for weakly trivial skew braces. Let $A$ be a weakly trivial skew brace such that $(A,+)$ satisfies the maximum condition on subgroups. Then so do $(A/A^2,+)$ and $(A/A^2_\op,+)$, hence also $(A/A^2,\circ)$ and $(A/A^2_\op,\circ)$. As $A$ embeds into $A/A^2\times A/A^2_\op$, we find that $(A,\circ)$ satisfies the maximum condition on subgroups. The other implication is proved similarly.

Next, let $A$ be any two-sided skew brace such that $(A,+)$ satisfies the maximum condition on subgroups. Then both $(A/(A^2\cap A^2_\op),+)$ and $(A^2\cap A^2_\op,+)$ satisfy the maximum condition on subgroups. As $A/(A^2\cap A^2_\op)$ is weakly trivial and $A^2\cap A^2_\op$ is a two-sided brace, we find that $(A/(A^2\cap A^2_\op),\circ)$ and $(A^2\cap A^2_\op,\circ)$ satisfy the maximum condition on subgroups. It follows that $(A,\circ)$ satisfies the maximum condition on subgroups. The other implication is proved similarly.
\end{proof}
It is natural to ask if \cref{theorem: additive iff multiplicative noetherian} can be generalized in the following way.
\begin{question}
Let $A$ be a two-sided skew brace that satisfies the maximum condition on subskew braces. Does this imply that the equivalent conditions of \cref{theorem: additive iff multiplicative noetherian} are satisfied?
\end{question}
For weakly trivial skew braces, this question can easily be answered affirmatively. The question therefore remains whether the same is true for all two-sided braces.

Similarly, the question arises whether we can replace the maximum condition on subgroups by the assumption that they are finitely generated. It is generally not true that if $(A,+)$ is finitely generated then $(A,\circ)$ is finitely generated, as the following example shows. 
\begin{example}
Consider the wreath product $G= C_2 \wr \Z$, also known as the lamplighter group, which is finitely generated but does not satisfy the maximum condition on subgroups as it contains the group $H=\bigoplus_{i\in \Z} C_2$. The exact factorization by $\Z$ and $H$ yields a skew brace with multiplicative group $H\times \Z$ and additive group $G$, see \cite[Theorem 3.3]{SV18}. As $H\times \Z$ is abelian, the obtained skew brace is clearly two-sided. However, $G$ is finitely generated while $H\times \Z$ is not.
\end{example}
On the other hand, it is not even known whether there exist two-sided braces with a finitely generated multiplicative group and non-finitely generated additive group. See \cite[Question 4.1]{Sys12} for a short discussion of an equivalent question.
\begin{theorem}\label{theorem: byott}
Let $A$ be a two-sided skew brace that satisfies the equivalent conditions of \cref{theorem: additive iff multiplicative noetherian}, then $(A,+)$ is soluble if and only if $(A,\circ)$ is soluble. In this case, $A$ is a soluble skew brace.
\end{theorem}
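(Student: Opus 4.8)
The plan is to exploit the canonical extension structure provided by \cref{cor: extension weakly trivial by brace}. Write $B=A^2\cap A_\op^2$, so that $B$ is a two-sided brace by \cref{theorem: A two-sided additive group intersection A^2 Aop^2 abelian}, $A/B$ is weakly trivial by \cref{prop: A/A^2 cap A^2op is weakly trivial}, and $B$ is an ideal of $A$. In particular $(B,+)\trianglelefteq(A,+)$ and $(B,\circ)\trianglelefteq(A,\circ)$, with quotients $(A/B,+)$ and $(A/B,\circ)$ respectively. Since group solubility is closed under extensions and quotients, and since \cref{cor: derived length weakly trivial} makes the additive and multiplicative groups of the weakly trivial piece $A/B$ simultaneously soluble, the argument reduces to controlling the brace piece $B$.

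First I would dispose of $B$. As $A$ satisfies the maximum condition on subgroups, so does the subgroup $(B,+)\le(A,+)$; applying \cref{prop: watters} to the two-sided brace $B$ then shows that $B$ is strongly nilpotent, so that $(B,\circ)$ is nilpotent and hence soluble, while $(B,+)$ is abelian because $B$ is a brace. With this in hand the equivalence follows by assembling the pieces. If $(A,+)$ is soluble then so is $(A/B,+)$, hence so is $(A/B,\circ)$ by \cref{cor: derived length weakly trivial}; combined with the solubility of $(B,\circ)$ this exhibits $(A,\circ)$ as an extension of soluble groups, so $(A,\circ)$ is soluble. Conversely, if $(A,\circ)$ is soluble then $(A/B,\circ)$ and hence $(A/B,+)$ are soluble, and since $(B,+)$ is abelian we again obtain that $(A,+)$ is soluble.

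For the final assertion that $A$ is soluble as a skew brace, I would observe that $A/B$ is soluble as a skew brace by \cref{cor: derived length weakly trivial}, while $B$ is soluble as a skew brace since strong nilpotency forces its derived series to terminate; concretely $B_n\subseteq B^{[n]}$, which one checks by induction using that $*$ is monotone on subsets and that $B^{[n]}\subseteq B$. It then suffices to note that skew-brace solubility is closed under extensions: from $A_m\subseteq B$ one gets $A_{m+j-1}\subseteq B_j$ for every $j$ by a one-line induction along the derived series, so $B_r=\{0\}$ forces $A_{m+r-1}=\{0\}$.

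The main obstacle is the implication that solubility of $(A,+)$ forces solubility of $(A,\circ)$, and it is precisely here that the maximum condition is indispensable: it is needed to invoke \cref{prop: watters} and thereby guarantee that the multiplicative group $(B,\circ)$ of the radical ring $B$ is soluble, a conclusion that can fail for general two-sided braces. The reverse implication is essentially automatic, as $(B,+)$ is abelian by definition, and the only additional bookkeeping is the extension-closure of skew-brace solubility used in the last step.
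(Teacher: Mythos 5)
Your proposal is correct and follows essentially the same route as the paper: reduce along the extension of the weakly trivial quotient $A/(A^2\cap A^2_\op)$ by the two-sided brace $A^2\cap A^2_\op$, handle the former via \cref{cor: derived length weakly trivial} and the latter via \cref{prop: watters}. The only difference is that you spell out the details the paper leaves implicit (extension-closure of group and skew-brace solubility, and $B_n\subseteq B^{[n]}$), all of which check out.
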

\begin{proof}
Similar to the proof of \cref{lem: circ nilpotent two-sided then A^2+A_op^2 +nilpotent} it is sufficient to prove the statement for weakly trivial skew braces and two-sided braces. For weakly trivial skew braces this was proved in \cref{cor: derived length weakly trivial}. For two-sided braces, the additive group is of course soluble and it follows from \cref{prop: watters} that both the brace and the multiplicative group are soluble.
\end{proof}
\begin{remark}
    Recall that one possible formulation of Byott's conjecture is that all finite skew braces of soluble type have a soluble multiplicative group \cite{Byo15}. \cref{theorem: byott} positively answers Byott's conjecture for all, not necessarily finite, two-sided skew braces satisfying the properties of \cref{theorem: additive iff multiplicative noetherian}.  We can not drop the condition of \cref{theorem: byott}, as there exist two-sided braces whose additive group is not finitely generated and whose multiplicative group is non-soluble; see \cite[Example 3.2]{Nas19}.
\end{remark}
Our next aim is to prove \cref{theorem: two-sided strongly nilpotent iff right iff left nilpotent} which states that, as is the case for two-sided braces, there is no distinction between left and right nilpotency of two-sided skew braces of nilpotent type.
\begin{lemma}\label{lem: distr center}
Let $A$ be a two-sided skew brace, then for all $a,b\in A$ and $c\in Z(A,+)$, we have $a*c,c*a\in Z(A,+)$ and
\begin{align*}
    c*(a+b)&=c*a+c*b,\\
    (a+b)*c&=a*c+b*c.
\end{align*}
\end{lemma}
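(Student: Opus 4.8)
The plan is to reduce both identities to the single structural fact that $Z(A,+)$ absorbs the $*$-operation on both sides, and then to feed this into the distributivity formulas of \cref{lem: distributivity skew}. The key observation is that although the two claimed equalities look symmetric, they behave quite differently: one of them is essentially free, while the other is exactly where two-sidedness enters.

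First I would record the two reductions. Applying the left-brace identity of \cref{lem: distributivity skew} with the central element in the first slot gives
\[c*(a+b)=c*a+a+c*b-a,\]
so the first equality holds as soon as $c*b\in Z(A,+)$. Dually, the right-brace identity gives
\[(a+b)*c=-b+a*c+b+b*c,\]
so the second equality holds as soon as $a*c\in Z(A,+)$. Thus everything comes down to proving the two containments $Z(A,+)*A\subseteq Z(A,+)$ and $A*Z(A,+)\subseteq Z(A,+)$.

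Next I would establish these containments. The center $Z(A,+)$ is a characteristic subgroup of $(A,+)$, so since $A$ is two-sided, \cref{cor: char is ideal} shows that $Z(A,+)$ is an ideal of $A$. For any ideal $I$ the projection $\pi\colon A\to A/I$ is a skew brace homomorphism, and since $*$ is built only from $+$ and $\circ$ it satisfies $\pi(x*y)=\pi(x)*\pi(y)$. Evaluating with $x\in I$ or $y\in I$ and using $\lambda_0=\id$ gives $\pi(x*y)=0$ in both cases, hence $I*A\subseteq I$ and $A*I\subseteq I$. Taking $I=Z(A,+)$ supplies both required containments, which together with the two reductions above finishes the argument.

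The only genuinely non-trivial point is the left-hand absorption $Z(A,+)*A\subseteq Z(A,+)$, needed for the first identity. Here the central element sits in the first slot, and $c*b=\lambda_c(b)-b$ offers no direct reason for centrality; this is in sharp contrast with the right-hand case $a*c=\lambda_a(c)-c$, which lands in $Z(A,+)$ for free because the additive automorphism $\lambda_a$ preserves the center. It is precisely to handle the left-hand slot that two-sidedness is used, via the identification of $Z(A,+)$ as an ideal through \cref{cor: char is ideal}. I would therefore flag this step as the heart of the proof, with the remaining computations being the routine substitutions displayed above.
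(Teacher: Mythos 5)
Your proof is correct and follows essentially the same route as the paper: both identify $Z(A,+)$ as an ideal via \cref{cor: char is ideal}, deduce $a*c,\,c*b\in Z(A,+)$, and conclude via \cref{lem: distributivity skew}, your only variation being that you verify the absorptions $Z(A,+)*A\subseteq Z(A,+)$ and $A*Z(A,+)\subseteq Z(A,+)$ through the quotient homomorphism $A\to A/Z(A,+)$ rather than the paper's explicit formula $c*b=-c+\lambda^\op_b(\overline{b}\circ c\circ b)$. One small caveat on your closing commentary: the expansion $(a+b)*c=-b+a*c+b+b*c$ is itself the skew \emph{right} brace identity from \cref{lem: distributivity skew}, so two-sidedness enters the second equality as well (through the expansion, not the containment), not only the first.
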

\begin{proof}
As $Z(A,+)$ is characteristic in $(A,+)$, it is an ideal of $A$ by \cref{cor: char is ideal}. Hence, for $a\in A$ and $c\in Z(A,+)$ we find $c*a=-c+\lambda_a^\op (\overline{a}\circ c\circ a)\in Z(A,+)$ and $a*c=\lambda_a(c)-c\in Z(A,+)$. The second part of the statement now follows from \cref{lem: distributivity skew}.
\end{proof}
\begin{lemma}\label{lem: associativity *}
Let $A$ be a two-sided skew brace, then for all $a,b\in A$ and $c\in Z(A,+)$,
\begin{equation}
    (a*b)*c=a*(b*c).
\end{equation}
\end{lemma}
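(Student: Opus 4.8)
The plan is to expand the single element $a*(b\circ c)$ in two different ways and then compare. Throughout I would exploit a centrality observation: by \cref{cor: char is ideal} the subgroup $Z(A,+)$ is an ideal, hence stable under every $\lambda_x$ and (since ideals of $A$ and $A_\op$ coincide) under every $\lambda_x^\op$. Consequently each of $a*c$, $b*c$, $(a*b)*c$ and $a*(b*c)$ lands in $Z(A,+)$ — this is exactly the remark already used in the proof of \cref{lem: distr center}. This centrality is what makes all the additive reorderings below legitimate even though $(A,+)$ is generally non-abelian.

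First I would rewrite $b\circ c = b + (b*c) + c$ and apply the left distributivity $a*(x+y)=a*x+x+a*y-x$ of \cref{lem: distributivity skew} twice. Since $b*c$, $c$, and therefore $a*(b*c)$ and $a*c$ are central, the conjugating factors of the form $+x\ \cdots\ -x$ collapse and leave
\[
a*(b\circ c) = a*b + a*(b*c) + a*c .
\]
Next I would expand the same element using the two-sided (right-brace) distributivity of \cref{lem: distributivity skew} — equivalently, expanding $(a\circ b)\circ c$ via the right-brace axiom together with $x\circ c = x + x*c + c$ — which is the route that produces a $(a*b)*c$ term; once more absorbing the conjugations by centrality yields
\[
a*(b\circ c) = a*b + (a*b)*c + a*c .
\]

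Finally, both right-hand sides have the shape $a*b + z + a*c$ with $z\in Z(A,+)$, so cancelling the common term $a*b$ on the left and the common term $a*c$ on the right gives $a*(b*c)=(a*b)*c$, which is the claim. I expect the only real obstacle to be bookkeeping: because $(A,+)$ need not be abelian, one must carefully track each conjugation generated by the distributivity laws and check that it vanishes precisely because the relevant $*$-product is central. In other words the whole argument hinges on the first step — that a $*$-product with a central right factor is again central — after which the two expansions reduce to a form in which $a*b$ and $a*c$ cancel cleanly.
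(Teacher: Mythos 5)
Your proof is correct, and it takes a genuinely different route from the paper's. The paper expands $(a*b)*c$ directly in a single chain: since by \cref{lem: distr center} the map $x\mapsto x*c$ is additive when $c\in Z(A,+)$, one gets $(a*b)*c=(-a+a\circ b-b)*c=-a*c+(a\circ b)*c-b*c$, and substituting the left-brace identity $(a\circ b)*c=a*(b*c)+b*c+a*c$ from \cref{lem: distributivity skew} makes every term except $a*(b*c)$ central, so everything cancels. You instead expand the single element $a*(b\circ c)$ twice --- once via $b\circ c=b+b*c+c$ and the left-brace identity $a*(x+y)=a*x+x+a*y-x$, once via the right-brace axiom --- and cancel $a*b$ and $a*c$; this mirrors the double-expansion trick the paper itself uses to prove \cref{lem: twosided skew brace additive commutator A*A A*opA is zero}, and it bypasses \cref{lem: distr center} entirely. (In fact your centrality observation needs less than you claim: $x*c=\lambda_x(c)-c\in Z(A,+)$ already because $Z(A,+)$ is characteristic in $(A,+)$ and each $\lambda_x$ is an additive automorphism, so your argument localizes the use of two-sidedness to the right-brace expansion itself.) One point worth flagging: your second expansion $a*(b\circ c)=a*b+(a*b)*c+a*c$ does not match the right-brace identity as printed in \cref{lem: distributivity skew}, which reads $a*(b\circ c)=a*c+b*c+(a*b)*c$; but the printed version is a misprint ($b*c$ should be $a*b$, as specializing to a Jacobson radical ring shows, since there it would assert $ab+ac+abc=ac+bc+abc$), and your fallback derivation --- writing $a\circ b\circ c=(a+(a*b)+b)\circ c$ and expanding by the right-brace axiom, which yields $a*(b\circ c)=a*c+a*b+(a*b)*c$ in any skew right brace --- is exactly right and, after centrally reordering $a*c$ and $(a*b)*c$, gives the formula you use. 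So the proof goes through; the trade-off is that the paper's argument is a four-line computation quoting two lemmas, while yours re-derives a (corrected) composition identity but exhibits the associativity $a*(b*c)=(a*b)*c$ conceptually, as the discrepancy between the two brace axioms applied to one element.
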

\begin{proof}
Let $a,b\in A$, $c\in Z(A,+)$. Using \cref{lem: distr center} and \cref{lem: distributivity skew} we find
\begin{align*}
    (a*b)*c&=(-a+a\circ b-b)*c\\
    &=-a*c+(a\circ b)* c-b*c\\
    &=-a*c+a*(b*c)+b*c+a*c-b*c\\
    &=a*(b*c).\qedhere
\end{align*}
\end{proof}
\begin{lemma}\label{lem: associativity * sets}
Let $A$ be a two-sided skew brace and $X,Y,Z$ subsets of $A$ with $X\subseteq Z(A,+)$. Then $(X*Y)*Z=X*(Y*Z)$.
\end{lemma}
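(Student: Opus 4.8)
The plan is to prove the two inclusions $(X*Y)*Z\subseteq X*(Y*Z)$ and $X*(Y*Z)\subseteq (X*Y)*Z$ separately, in each case reducing the set-level statement to the element-level identity of \cref{lem: associativity *} by exploiting two additivity properties of the $*$-operation when one argument is central.

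First I would record two structural observations. Since $Z(A,+)$ is characteristic in $(A,+)$, it is an ideal of $A$ by \cref{cor: char is ideal}, hence in particular a left ideal; thus for any $a\in A$ and $c\in Z(A,+)$ we have $\lambda_a(c)\in Z(A,+)$ and therefore $a*c=\lambda_a(c)-c\in Z(A,+)$. Applied to the generators of $Y*Z$, this shows $Y*Z\subseteq Z(A,+)$. Second, \cref{lem: distr center} tells us that for a fixed central $z$ the map $a\mapsto a*z$ is an endomorphism of $(A,+)$, and a short computation from the first identity of \cref{lem: distributivity skew} (using that $a*c$ is central whenever $c$ is central, so the conjugating terms $b+(a*c)-b$ collapse to $a*c$) shows that for a fixed $a$ the map $b\mapsto a*b$ is additive on $Z(A,+)$.

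For the inclusion $(X*Y)*Z\subseteq X*(Y*Z)$, it suffices to check that each generator $w*z$ with $w\in X*Y$ and $z\in Z$ lies in $X*(Y*Z)$, since the latter is an additive subgroup. Writing $w$ as a sum of terms $\pm(x_i*y_i)$ with $x_i\in X$, $y_i\in Y$ and using that $a\mapsto a*z$ is additive, I obtain $w*z=\sum_i \pm\bigl((x_i*y_i)*z\bigr)$. By \cref{lem: associativity *} each summand equals $x_i*(y_i*z)$, which is a generator of $X*(Y*Z)$ because $y_i*z\in Y*Z$; hence $w*z\in X*(Y*Z)$.

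The reverse inclusion is symmetric. A generator $x*v$ of $X*(Y*Z)$ has $x\in X$ and $v\in Y*Z\subseteq Z(A,+)$, so $v$ is a sum of central generators $\pm(y_j*z_j)$. Using that $b\mapsto x*b$ is additive on $Z(A,+)$ gives $x*v=\sum_j \pm\, x*(y_j*z_j)$, and \cref{lem: associativity *} rewrites each term as $(x*y_j)*z_j$, a generator of $(X*Y)*Z$; thus $x*v\in (X*Y)*Z$. The main point to get right is precisely the transition from the element-level identity of \cref{lem: associativity *} to sets, which is exactly what the two additivity properties—one for each argument—make possible, together with the crucial fact that $Y*Z$ already lies in the additive centre.
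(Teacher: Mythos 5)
Your proof is correct and follows essentially the same route as the paper: both arguments reduce the set-level identity to the element-level associativity of \cref{lem: associativity *} by showing each side is generated by the corresponding triple products, using additivity of $*$ in one argument when the relevant elements are central. The only difference is presentational: you argue by mutual inclusion and explicitly verify, via \cref{lem: distributivity skew} and the centrality of $a*c$, that $b\mapsto a*b$ is additive on $Z(A,+)$ --- a point the paper leaves to ``a similar argument'' --- together with the observation $Y*Z\subseteq Z(A,+)$, which the paper also uses implicitly.
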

\begin{proof}
By definition, $Y*Z$ consists of all $\sum_{i=1}^n\epsilon_i (b_i*c_i)$ with $n\in \N$, $\epsilon_i\in \{-1,1\}$, $b_i\in Y$, $c_i\in Z$. Therefore, $X*(Y*Z)$ is the additive subgroup generated by 
$$\left\{a*\left(\sum_{i=1}^n\epsilon_i (b_i*c_i)\right)\mid n\in \N, \epsilon_i\in \{-1,1\}, a\in X, b_i\in Y, c_i\in Z\right\}.$$
Using \cref{lem: distr center} we find that
$ a*(\sum_{i=1}^n\epsilon_i (b_i*c_i))=\sum_{i=1}^n\epsilon_i (a*(b_i*c_i))
$,
so $X*(Y*Z)$ is the additive subgroup generated by
\begin{equation}
    \{(a*(b*c)\mid a\in X, b\in Y, c\in Z\}.\label{eq: generators 1}
\end{equation}
A similar argument shows that $(X*Y)*Z$ is the additive subgroup generated by 
\begin{equation}
    \{(a*b)*c\mid a\in X, b\in Y, c\in Z\}.\label{eq: generators 2}
\end{equation}
It follows from \cref{lem: associativity *} that \eqref{eq: generators 1} and \eqref{eq: generators 2} are the same set.
\end{proof}
\begin{lemma}\label{lem: well behaving powers}
Let $A$ be a two-sided skew brace and $m\geq 1$ such that $A^{(m)}\subseteq Z(A,+)$, then for all $k\geq 1$,
\begin{align}
    A^{(m+k)}&=A^{(m)}*A^{(k)},\label{eq: intermediate}\\
    A^{(mk)}&=(A^{(m)})^{(k)}.\label{eq: final}
\end{align}
\end{lemma}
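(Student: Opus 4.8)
The plan is to establish \eqref{eq: intermediate} by induction on $k$ and then to deduce \eqref{eq: final} from it by a second, short induction. Write $C=Z(A,+)$ and note that the hypothesis $A^{(m)}\subseteq C$ propagates upward: since the right series is descending, $A^{(n)}\subseteq C$ for every $n\geq m$.

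For \eqref{eq: intermediate} the base case $k=1$ is just the definition $A^{(m+1)}=A^{(m)}*A=A^{(m)}*A^{(1)}$. In the inductive step the definition of the right series together with the induction hypothesis gives
\[
A^{(m+k+1)}=A^{(m+k)}*A=(A^{(m)}*A^{(k)})*A,
\]
so the whole step reduces to the rebracketing $(A^{(m)}*A^{(k)})*A=A^{(m)}*(A^{(k)}*A)$, that is, to an associativity law $(X*Y)*Z=X*(Y*Z)$ in which the central factor is $X=A^{(m)}$, sitting on the \emph{left}.

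This is the main obstacle. The associativity results already available, \cref{lem: associativity *} and \cref{lem: associativity * sets}, require the \emph{rightmost} factor to be central, and this never matches the right-series recursion, whose newly appended factor $A$ is not central. I therefore need the left-central analogue: $(a*b)*c=a*(b*c)$ whenever $a\in C$, together with its set version $(X*Y)*Z=X*(Y*Z)$ for $X\subseteq C$. I would prove the element identity by an argument parallel to \cref{lem: associativity *}, but based on the additive-distributivity law $(x+y)*c=-y+x*c+y+y*c$ of \cref{lem: distributivity skew} rather than a product-in-the-second-argument formula. The decisive feature is that $a\in C$ forces every product $a*b,\ a*c,\ (a*b)*c$ back into the ideal $C$, so each conjugation $-y+(\text{central})+y$ collapses. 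Concretely, expanding $(a\circ b)*c$ via $a\circ b=a+(a*b)+b$ and the centrality of $a$ yields $(a*b)*c+a*c+b*c$, while the left-brace identity $(a\circ b)*c=a*(b*c)+b*c+a*c$ yields the second form; since $a*c$ is central it commutes with $b*c$, the two trailing sums agree, and right-cancellation in $(A,+)$ leaves $(a*b)*c=a*(b*c)$. The set version is then routine: $X\subseteq C$ makes all generators $a*b$ of $X*Y$ central, so both $(X*Y)*Z$ and $X*(Y*Z)$ collapse onto the additive groups generated by $\{(a*b)*c\}$ and $\{a*(b*c)\}$, which coincide.

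With left-central associativity the inductive step for \eqref{eq: intermediate} closes at once. For \eqref{eq: final} I induct on $k$: the case $k=1$ is immediate, and from $(A^{(m)})^{(k)}=A^{(mk)}$ I obtain $(A^{(m)})^{(k+1)}=(A^{(m)})^{(k)}*A^{(m)}=A^{(mk)}*A^{(m)}$; as $mk\geq m$ ensures $A^{(mk)}\subseteq C$, applying \eqref{eq: intermediate} with $mk$ in the role of $m$ and $m$ in the role of $k$ rewrites this as $A^{(mk+m)}=A^{(m(k+1))}$. The place demanding genuine care is the interaction with the non-abelian group $(A,+)$: every cancellation above is justified only because the $*$-products involved are central, so verifying that centrality—and that it is preserved under all the manipulations—is where the real work lies.
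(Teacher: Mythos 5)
Your proposal is correct, and its overall skeleton is the same as the paper's: prove \eqref{eq: intermediate} by induction on $k$ via a rebracketing of $*$-products with one central factor, then deduce \eqref{eq: final} by a second induction, using (as you rightly note explicitly, and the paper only implicitly) that the hypothesis propagates upward along the descending right series, so that \eqref{eq: intermediate} may be applied with $mk$ in place of $m$. The genuine difference lies in the associativity ingredient, and here your observation is sharper than the paper's own text. The paper proves $(a*b)*c=a*(b*c)$ and its set version with the \emph{rightmost} factor central (\cref{lem: associativity *} and \cref{lem: associativity * sets} assume $c\in Z(A,+)$, respectively $Z\subseteq Z(A,+)$), and then cites \cref{lem: associativity * sets} for the step $(A^{(m)}*A^{(k-1)})*A=A^{(m)}*(A^{(k-1)}*A)$ --- where the central factor is $A^{(m)}$ on the \emph{left} and the rightmost factor is all of $A$, so the stated hypothesis is not literally met; this is a side-swap slip in the paper. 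You identified exactly this mismatch and proved the mirrored, left-central identity, which is the version the right-series recursion actually requires. Your derivation of it is sound: since $Z(A,+)$ is an ideal by \cref{cor: char is ideal}, an element $a\in Z(A,+)$ forces $a*b$, $a*c$ and $(a*b)*c$ to be central (as in the proof of \cref{lem: distr center}), so expanding $(a\circ b)*c$ via $a\circ b=a+(a*b)+b$ with the right-brace law of \cref{lem: distributivity skew} collapses the conjugations to give $a*c+(a*b)*c+b*c$, and comparison with the left-brace identity $(a\circ b)*c=a*(b*c)+b*c+a*c$, followed by cancellation justified by the centrality of $a*c$, yields $(a*b)*c=a*(b*c)$. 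The set-level upgrade also goes through as you sketch: for $X*(Y*Z)$ you can even quote the first identity of \cref{lem: distr center} verbatim, while for $(X*Y)*Z$ you need the additional (correct) observation that all generators of $X*Y$ are central, so that $(u+v)*c=u*c+v*c$ for $u$ central. In effect your route is the repaired form of the paper's argument: both the right-central and the left-central associativity laws are true, but only the latter fits the recursion $A^{(n+1)}=A^{(n)}*A$, and your proposal is the one in which the citation actually matches its use.
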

\begin{proof}
We first prove \eqref{eq: intermediate} by induction on $k$. For $k=1$ this is true by definition. For $k>1$ we use the induction hypothesis and \cref{lem: associativity * sets} to find
\begin{equation*}
    A^{(m+k)}=A^{(m+k-1)}*A=(A^{(m)}*A^{(k-1)})*A=A^{(m)}*(A^{(k-1)}*A)=A^{(m)}*A^{(k)}.
\end{equation*}

Next we prove \eqref{eq: final} by induction on $k$. For $k=1$ this is trivial. We can use the induction hypothesis and \eqref{eq: intermediate} to find that also for $k>1$,
\begin{equation*}
    A^{(mk)}=A^{(m(k-1)+m)}=A^{(m(k-1))}*A^{(m)}=(A^{(m)})^{(k-1)}*A^{(m)}=(A^{(m)})^{(k)}.\qedhere
\end{equation*}

\end{proof}
\begin{theorem}\label{theorem: two-sided strongly nilpotent iff right iff left nilpotent}
Let $A$ be a two-sided skew brace of nilpotent type, then the following properties are equivalent
\begin{enumerate}
    \item $A$ is left nilpotent,
    \item $A$ is right nilpotent,
    \item $A$ is strongly nilpotent.
\end{enumerate}
\end{theorem}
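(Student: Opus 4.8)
The plan is to induct on the nilpotency class $c$ of $(A,+)$, using that strong nilpotency is equivalent to simultaneous left and right nilpotency by \cref{theorem: left right strong nilpotency}; it therefore suffices to prove that left and right nilpotency are equivalent for $A$. For the base case $c\le 1$ the additive group is abelian, so $A$ is a two-sided brace, and here the three notions of nilpotency already coincide because the associated ring multiplication $*$ is associative, as recalled in the preliminaries.

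For the inductive step I would first observe that $Z:=Z(A,+)$ is characteristic in $(A,+)$, hence an ideal of $A$ by \cref{cor: char is ideal}. The quotient $A/Z$ is again a two-sided skew brace of nilpotent type, now of class $c-1$, so by the induction hypothesis its left, right and strong nilpotency all coincide. The role of $Z$ is precisely to force the relevant power of $A$ into the centre of $(A,+)$: if $A$ is left nilpotent then $A/Z$ is (strongly, hence) right nilpotent, which gives $A^{(m)}\subseteq Z$ for some $m$; symmetrically, if $A$ is right nilpotent then $A/Z$ is left nilpotent, giving $A^{n}\subseteq Z$ for some $n$.

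From here the point is to transfer nilpotency between $A$ and this central power, which is now a genuine (associative) radical ring. In the direction left $\Rightarrow$ right I would set $B=A^{(m)}\subseteq Z$; then $(B,+)$ is abelian, so $B$ is a two-sided brace, and $B$ inherits left nilpotency as a subskew brace of $A$ (since $B^{k}\subseteq A^{k}$), hence is right nilpotent. \cref{lem: well behaving powers} then yields $A^{(mr)}=(A^{(m)})^{(r)}=B^{(r)}=0$, so $A$ is right nilpotent. For the direction right $\Rightarrow$ left I would instead set $B=A^{n}\subseteq Z$, which is right nilpotent as a subskew brace of $A$, and exploit the centrality of $B$ together with \cref{lem: associativity * sets} to establish by induction the identity $A^{n+k}=A^{k}*B$, and consequently $A^{kn}=B^{(k)}$ for all $k$; right nilpotency of $B$ then forces $A^{rn}=0$, so $A$ is left nilpotent.

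I expect the main obstacle to be the right $\Rightarrow$ left direction, since \cref{lem: well behaving powers} is tailored to the right series and there is no prepackaged analogue for the left series. The crux is that once $A^{n}$ lands in $Z(A,+)$, the partial associativity of $*$ on central elements provided by \cref{lem: associativity * sets} is exactly enough to rewrite the left-nested products $A^{kn}$ as the right series of $B$; getting this bookkeeping right, rather than any single hard estimate, is where the care lies.
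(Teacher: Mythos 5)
Your proof is correct and follows essentially the same route as the paper: induction on the nilpotency class of $(A,+)$, using that $Z(A,+)$ is an ideal by \cref{cor: char is ideal}, passing to $A/Z(A,+)$ to force a power of $A$ into the centre, and concluding via \cref{lem: well behaving powers} (the paper works with $Z(A,+)^{(k)}$ directly where you use $B=A^{(m)}$, an immaterial difference). The paper dismisses the direction $2\Rightarrow 1$ with ``a similar argument''; your identities $A^{n+k}=A^{k}*B$ and $A^{kn}=B^{(k)}$, derived from \cref{lem: associativity * sets}, are precisely the correct left-series analogue that fills in that omitted step.
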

\begin{proof}
Because of \cref{theorem: left right strong nilpotency}, only the equivalence of 1 and 2 has to be proved. We prove the implication from 1 to 2 through induction on the nilpotency class $n$ of $(A,+)$. If $n=1$ then $A$ is a two-sided brace, so in particular left and right nilpotency coincide. Now assume that the claim is true for $n-1$ and let $A$ be a two-sided skew brace such that $(A,+)$ has nilpotency class $n$. Then $A/Z(A,+)$ is still left nilpotent and its additive group has nilpotency class $n-1$, so it is right nilpotent. Let $m$ be such that $A^{(m)}\subseteq Z(A,+)$. From the assumption that $A$ is left nilpotent, we find that in particular the subbrace $Z(A,+)$ is left, so also right, nilpotent. Let $k$ be such that $Z(A,+)^{(k)}=\{0\}$. By \cref{lem: well behaving powers} it follows that $A^{(mk)}=(A^{(m)})^{(k)}\subseteq Z(A,+)^{(k)}=\{0\}$, so $A$ is right nilpotent.

A similar argument proves the implication from 2 to 1.
\end{proof}
\begin{theorem}\label{theorem: generalisation watters nilpotent type}
Let $A$ be a two-sided skew brace that satisfies the equivalent conditions of \cref{theorem: additive iff multiplicative noetherian}, if $(A,+)$ is nilpotent then $(A,\circ)$ is nilpotent and $A$ is a strongly nilpotent skew brace.
\end{theorem}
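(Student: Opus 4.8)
The plan is to establish the two assertions in turn: first that $A$ is strongly nilpotent, and then, using this, that $(A,\circ)$ is nilpotent. Throughout I would use that $(A,\circ)$ also satisfies the maximum condition on subgroups, by \cref{theorem: additive iff multiplicative noetherian}, so that both groups are available as Noetherian groups and \cref{prop: watters} can be invoked on any subbrace of abelian type.

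For strong nilpotency I would argue by induction on the nilpotency class $c$ of $(A,+)$, imitating the proof of \cref{theorem: two-sided strongly nilpotent iff right iff left nilpotent} but feeding in the maximum condition in place of a left nilpotency hypothesis. When $(A,+)$ is abelian, $A$ is a two-sided brace satisfying the maximum condition, so \cref{prop: watters} gives strong nilpotency directly. For $c\ge 2$, the subgroup $Z(A,+)$ is characteristic in $(A,+)$ and hence an ideal by \cref{cor: char is ideal}; the quotient $A/Z(A,+)$ is again two-sided, of additive class $c-1$, and inherits the maximum condition, so by induction it is strongly nilpotent, in particular right nilpotent, giving $A^{(m)}\subseteq Z(A,+)$ for some $m$. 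On the other hand $Z(A,+)$ is a two-sided brace satisfying the maximum condition, so \cref{prop: watters} shows it is strongly nilpotent, whence $Z(A,+)^{(t)}=\{0\}$ for some $t$. Since $A^{(m)}\subseteq Z(A,+)$, \cref{lem: well behaving powers} applies and yields $A^{(mt)}=(A^{(m)})^{(t)}\subseteq Z(A,+)^{(t)}=\{0\}$, so $A$ is right nilpotent; \cref{theorem: two-sided strongly nilpotent iff right iff left nilpotent} then upgrades this to strong nilpotency.

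The harder statement is that $(A,\circ)$ is nilpotent, and I expect this to be the main obstacle: nilpotency of groups is not closed under extensions, so one cannot simply split $A$ into a weakly trivial quotient and a two-sided brace as in the proofs of \cref{theorem: additive iff multiplicative noetherian} and \cref{theorem: byott}. Instead I would build an ascending central series of $(A,\circ)$ directly. The key observation is that every nonzero strongly nilpotent skew brace $B$ of nilpotent type contains a nonzero ideal lying in its annihilator $\{a\in Z(B,+): \lambda_a=\id \text{ and } \lambda_b(a)=a \text{ for all } b\in B\}$. Indeed, if $N$ is minimal with $B^{[N]}=\{0\}$, then the relations $B\ast B^{[N-1]}=\{0\}=B^{[N-1]}\ast B$ force the nonzero ideal $B^{[N-1]}$ into $\ker\lambda\cap\{a:\lambda_b(a)=a\ \forall b\}$, and being a nonzero ideal it meets the centre $Z(B,+)$ of the nilpotent group $(B,+)$ nontrivially. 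Every element $a$ of such an ideal satisfies $a\circ b=a+b=b+a=b\circ a$ for all $b$, so the ideal is central in $(B,\circ)$; verifying this multiplicative centrality is exactly where two-sidedness is used.

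Applying this observation repeatedly yields the conclusion. I would set $V_0=\{0\}$ and, as long as $A/V_k\neq\{0\}$, choose $V_{k+1}\supsetneq V_k$ so that $V_{k+1}/V_k$ is a nonzero ideal of $A/V_k$ that is central in $(A/V_k,\circ)=(A,\circ)/(V_k,\circ)$; this is possible because $A/V_k$ is again strongly nilpotent and of nilpotent type. The resulting chain of ideals is strictly ascending in $(A,+)$, so by the maximum condition it stabilises, necessarily at $A$. This produces a finite central series of $(A,\circ)$, and hence $(A,\circ)$ is nilpotent; its strong nilpotency as a skew brace was already secured in the first part. The delicate points, to which I would pay most attention, are the ideal property of the strong series used to locate the annihilator and the final bookkeeping that the ascending central series is finite, which is precisely where the maximum condition is indispensable.
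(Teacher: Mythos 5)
Your first half coincides with the paper's own proof: the same induction on the nilpotency class of $(A,+)$, with \cref{prop: watters} handling both the base case and the centre $Z(A,+)$ (an ideal by \cref{cor: char is ideal}), and \cref{lem: well behaving powers} giving $A^{(mk)}=(A^{(m)})^{(k)}\subseteq Z(A,+)^{(k)}=\{0\}$, after which \cref{theorem: two-sided strongly nilpotent iff right iff left nilpotent} upgrades right nilpotency to strong nilpotency. The genuine divergence is in the multiplicative statement: the paper disposes of it in one line by citing \cite[Proposition 2.12]{Jespers2022NilpotencyEquation}, which gives nilpotency of $(A,\circ)$ for any strongly nilpotent skew brace of nilpotent type with no chain condition, whereas you reprove it in the case at hand by an ascending annihilator series, using the maximum condition (legitimately available) to terminate it. Your route is correct and completable, and the point you rightly flag as delicate does close in the two-sided setting, which suffices since every quotient $A/V_k$ is again two-sided: $W=B^{[N-1]}$ is invariant under all skew brace automorphisms, hence normal in $(B,\circ)$ by \cref{prop: twosided iff mult conj additive automorphism}, and additive normality then comes for free, because for $w\in W$ the relations $\lambda_w=\id$ and $\lambda_{\overline{b}}(w)=w$ yield $\overline{b}\circ w\circ b=\overline{b}\circ(w+b)=\overline{b}\circ w-\overline{b}=\overline{b}+w-\overline{b}$, so $+$-conjugation agrees with $\circ$-conjugation on $W$; then $W\cap Z(B,+)\neq\{0\}$ by nilpotent type, and any additive subgroup of the annihilator is automatically an ideal (its elements are fixed by every $\lambda_b$ and central in both groups). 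Two small corrections: your parenthetical that multiplicative centrality ``is exactly where two-sidedness is used'' is misplaced --- $a\circ b=a+b=b+a=b\circ a$ holds for annihilator elements in \emph{any} skew brace, and two-sidedness is really consumed by the $\circ$-normality of $B^{[N-1]}$ just described; and note that your appeal to the maximum condition in the second half does real work, since from your lemma alone the annihilator series need not terminate in finitely many steps, so your argument proves slightly less than the cited proposition, though exactly enough for this theorem. In sum, the paper's citation is shorter and more general; your version buys a self-contained proof at the cost of reusing the Noetherian hypothesis.
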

\begin{proof}
We prove by induction on the nilpotency class $n$ of $(A,+)$ that $A$ is right nilpotent. Then by \cref{theorem: two-sided strongly nilpotent iff right iff left nilpotent} $A$ is strongly nilpotent and by \cite[Proposition 2.12]{JVAV22} $(A,\circ)$ is nilpotent. For $n=1$, the statement follows directly from \cref{prop: watters}. For $n>1$, we know that $(A/Z(A,+),+)$ has nilpotency class $n-1$, so by the induction hypothesis there exists some $m$ such that $A^{(m)}\subseteq Z(A,+)$. From the case $n=1$, we know that $Z(A,+)$ is right nilpotent hence $Z(A,+)^{(k)}=\{0\}$ for an appropriate choice of $k$. It follows by \cref{lem: well behaving powers} that $A^{(mk)}=(A^{(m)})^{(k)}\subseteq Z(A,+)^{(k)}=\{0\}$, so $A$ is right nilpotent.
\end{proof}

\section{Prime and semiprime two-sided skew braces}\label{section: prime and semiprime two-sided skew braces}
In \cite{KSV21} and its appendix, the following notions are introduced. 
\begin{definition}
Let $A$ be a skew brace. 
\begin{itemize}
\item $A$ is \emph{prime} if $I*J\neq \{0\}$ for any non-zero ideals $I$ and $J$.
\item $A$ is \emph{strongly prime} if every $*$-product of any number of non-zero ideals is non-zero.
\item $A$ is \emph{semiprime} if $I*I\neq \{0\}$ for any non-zero ideal $I$.
\item $A$ is \emph{strongly semiprime} if every $*$-product of any number of copies of a non-zero ideal $I$ is non-zero.
\end{itemize}
\end{definition}

For two-sided braces, both variations of (semi)primeness correspond with the usual notions for rings. As the authors note in \cite{KSV21}, it is an open question whether every prime, respectively semiprime, skew brace is a strongly prime, respectively strongly semiprime, skew brace. In this section we affirmatively answer this question for two-sided skew braces.
\begin{lemma}\label{lem: XY normal in circ then X*Y normal in circ}
Let $A$ be a two-sided skew brace and $X,Y$ subsets of $A$ which are normal in $(A,\circ)$, then $X*Y$ is normal in $(A,\circ)$.
\end{lemma}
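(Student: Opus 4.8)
The plan is to exploit \cref{prop: twosided iff mult conj additive automorphism}, which guarantees that in a two-sided skew brace every inner automorphism of $(A,\circ)$ is simultaneously a skew brace automorphism. So I would fix $c\in A$, write $\phi$ for the inner automorphism $a\mapsto c\circ a\circ\overline{c}$, and use that $\phi$ respects both $+$ and $\circ$. The whole argument then reduces to checking that such a $\phi$ carries $X*Y$ into itself.

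First I would record the elementary but crucial observation that any skew brace automorphism preserves the $*$-operation: since $x*y=-x+x\circ y-y$ and $\phi$ is an automorphism of $(A,+)$ and of $(A,\circ)$ at once, one gets $\phi(x*y)=\phi(x)*\phi(y)$ for all $x,y$. Next, the hypothesis that $X$ and $Y$ are normal in $(A,\circ)$ means precisely that $\phi(X)=X$ and $\phi(Y)=Y$, so on the generators of $X*Y$ we have $\phi(x*y)=\phi(x)*\phi(y)\in X*Y$ with $\phi(x)\in X$ and $\phi(y)\in Y$. Finally I would pass from generators to the whole additive subgroup: a typical element of $X*Y$ is a finite sum $\sum_i\epsilon_i(x_i*y_i)$ with $\epsilon_i\in\{-1,1\}$, and applying the additive homomorphism $\phi$ turns it into $\sum_i\epsilon_i\bigl(\phi(x_i)*\phi(y_i)\bigr)\in X*Y$. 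Hence $\phi(X*Y)\subseteq X*Y$; running the same argument with $\overline{c}$ in place of $c$ gives the reverse inclusion, and since $c$ was arbitrary this shows $X*Y$ is normal in $(A,\circ)$.

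There is really no substantial obstacle here, and the proof is short once the right tool is in hand; the only points requiring a little care are that normality must be used for \emph{all} $c\in A$ (equivalently, together with $\overline{c}$) in order to upgrade the inclusion $\phi(X*Y)\subseteq X*Y$ to an equality, and that the reduction to generators is legitimate only because $\phi$ is an \emph{additive} automorphism, so that it commutes with the formation of the additive subgroup generated by the products $x*y$. The conceptual content is entirely carried by \cref{prop: twosided iff mult conj additive automorphism}: two-sidedness is exactly what makes conjugation compatible with $*$.
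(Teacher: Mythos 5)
Your proposal is correct and follows essentially the same route as the paper: both invoke \cref{prop: twosided iff mult conj additive automorphism} to get $a\circ(x*y)\circ\overline{a}=(a\circ x\circ\overline{a})*(a\circ y\circ\overline{a})$ on generators, and then use that the inner automorphism is additive to pass to the additive subgroup $X*Y$. Your write-up is just a more detailed version of the paper's two-step argument, with the equality $\phi(X*Y)=X*Y$ made explicit via $\overline{c}$.
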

\begin{proof}
Using \cref{prop: twosided iff mult conj additive automorphism} we find for arbitrary $a\in A$, $x\in X$, $y\in Y$,
\begin{align*}
    a\circ (x*y)\circ \overline{a}
    &=(a\circ x\circ \overline{a})*(a\circ y\circ \overline{a}),
\end{align*}
from which we conclude that $\{x*y\mid x\in X, y\in Y\}$ is a normal subset of $(A,\circ)$.
Once again using \cref{prop: twosided iff mult conj additive automorphism} we find that $X*Y$, which by definition is the additive subgroup generated by this set, is normal in $(A,\circ)$.
\end{proof}

\begin{lemma}\label{lem: X norm in circ I left ideal X*I left ideal}
Let $A$ be a skew left brace, $J$ a left ideal of $A$ and $X$ a normal subset of $(A,\circ)$. Then $X*J$ is a left ideal of $A$.
\end{lemma}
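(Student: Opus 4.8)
The plan is to check the two defining properties of a left ideal directly. By the definition of $X*Y$ as the additive subgroup of $(A,+)$ generated by $\{x*y \mid x\in X,\ y\in Y\}$, the set $X*I$ is automatically a subgroup of $(A,+)$, so the entire content of the lemma is the closure condition $\lambda_a(X*I)\subseteq X*I$ for every $a\in A$. Since each $\lambda_a$ is an automorphism of $(A,+)$ and $X*I$ is generated as an additive group by the elements $x*i$ with $x\in X$ and $i\in I$, it suffices to prove $\lambda_a(x*i)\in X*I$ for all such $a,x,i$.

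The core of the argument is the identity
\[
\lambda_a(x*i)=(a\circ x\circ\overline{a})*\lambda_a(i),
\]
which in fact holds in every skew left brace, so that two-sidedness is never invoked. To establish it I would proceed in three moves. First, using that $\lambda_a$ is additive together with the homomorphism property $\lambda_a\lambda_x=\lambda_{a\circ x}$ and the relation $\lambda_b(i)=b*i+i$, one gets $\lambda_a(x*i)=\lambda_{a\circ x}(i)-\lambda_a(i)=(a\circ x)*i-a*i$. Second, writing $a\circ x=(a\circ x\circ\overline{a})\circ a$ and setting $x'=a\circ x\circ\overline{a}$, the second left brace identity of \cref{lem: distributivity skew} gives $(x'\circ a)*i=x'*(a*i)+a*i+x'*i$, whence $\lambda_a(x*i)=x'*(a*i)+a*i+x'*i-a*i$. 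Third, the first identity of \cref{lem: distributivity skew}, read as $x'*((a*i)+i)=x'*(a*i)+(a*i)+x'*i-(a*i)$, shows that this last expression equals $x'*((a*i)+i)$, which in turn equals $x'*\lambda_a(i)$ because $(a*i)+i=\lambda_a(i)$.

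With the identity available the conclusion is immediate: normality of $X$ in $(A,\circ)$ yields $x'=a\circ x\circ\overline{a}\in X$, and the left ideal property of $I$ yields $\lambda_a(i)\in I$, so $\lambda_a(x*i)=x'*\lambda_a(i)$ is one of the defining generators of $X*I$ and hence lies in $X*I$. I expect the main obstacle to be locating this clean identity rather than verifying it. A direct expansion of $\lambda_a(x*i)$ leaves a residual term $(a*i)+(x'*i)-(a*i)$, which is the conjugate in $(A,+)$ of the element $x'*i\in X*I$ by $a*i$; since $a*i$ is not a priori known to normalize $X*I$, this term looks problematic. The resolution is to recognize that this conjugation is exactly the defect built into the left brace distributivity law for $x'*((a*i)+i)$, which is what recombines the three summands into the single product $x'*\lambda_a(i)$.
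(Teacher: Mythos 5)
Your proof is correct, and it turns on exactly the same key identity as the paper's, namely $\lambda_a(x*i)=(a\circ x\circ\overline{a})*\lambda_a(i)$; the conclusion you draw from it (normality of $X$ in $(A,\circ)$ gives $a\circ x\circ\overline{a}\in X$, the left ideal property gives $\lambda_a(i)\in I$, and additivity of $\lambda_a$ extends the containment from generators to all of the additive subgroup $X*I$) is also the paper's. The only genuine divergence is in how the identity is established: you derive it in three moves via both identities of \cref{lem: distributivity skew}, recombining the residual conjugation term $(a*i)+x'*i-(a*i)$ into $x'*\lambda_a(i)$, whereas the paper obtains it in a single line by never unfolding $*$ in the left-hand factor. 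Writing $x*i=\lambda_x(i)-i$ and using that $\lambda$ is a group homomorphism $(A,\circ)\to\Aut(A,+)$, one has
\begin{equation*}
\lambda_a(x*i)=\lambda_a\lambda_x(i)-\lambda_a(i)=\lambda_{(a\circ x\circ\overline{a})\circ a}(i)-\lambda_a(i)=\lambda_{a\circ x\circ\overline{a}}\bigl(\lambda_a(i)\bigr)-\lambda_a(i)=(a\circ x\circ\overline{a})*\lambda_a(i).
\end{equation*}
Neither route is more general — both use only the skew left brace axioms, and you correctly note that two-sidedness is never invoked, matching the hypotheses of the lemma — but the paper's factorization $\lambda_{a\circ x}=\lambda_{a\circ x\circ\overline{a}}\lambda_a$ sidesteps entirely the conjugation defect that your final paragraph has to explain away, while your computation makes visible how the distributivity laws absorb exactly that defect; both verifications are complete and correct.
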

\begin{proof}
This follows from the fact that for all $a,b,c\in A$ we have the equality
\begin{equation*}
    \lambda_a(b*c)=\lambda_a(\lambda_b(c)-c)
    =\lambda_{a\circ b\circ \overline{a}}\lambda_a(c)-\lambda_a(c)
    =(a\circ b\circ \overline{a})*\lambda_a(c).\qedhere
\end{equation*}
\end{proof}

\begin{lemma}\label{lem: IJ is ideal under conditions}
Let $A$ be a two-sided skew brace, and $I$ and $J$ ideals. If $J\cap A_\op^2=\{0\}$ or $J\subseteq A_\op$, then $I*J$ is an ideal of $A$.
\end{lemma}
\begin{proof}
It follows from \cref{lem: XY normal in circ then X*Y normal in circ} and \cref{lem: X norm in circ I left ideal X*I left ideal} that $I*J$ is a left ideal which is moreover normal in $(A,\circ)$. Therefore it remains to show that $I*J$ is normal in $(A,+)$. We treat the two cases separately.

Assume $J\cap A_\op^2=\{0\}$, so in particular $I*_\op J=\{0\}$. Then $x\circ y=y+x$ or equivalently $x*y=-x+y+x-y$ for all $x\in I$, $y\in J$. We find that $I*J$ is in fact the commutator of the subgroups $(I,+)$ and $(J,+)$. As these are normal subgroups of $(A,+)$, also their commutator is normal in $(A,+)$.

Next, assume that $J\subseteq A_\op^2$ instead. As $I*J$ is the additive group generated by the elements $x*y$, where $x\in I$, $y\in J$, it is sufficient to prove that $-a+x*y+a\in I*J$ for all $a\in A$. From \cref{lem: distributivity skew} we find
\begin{equation*}
    0=x*(a-a)=x*a+a+x*(-a)-a,
\end{equation*}
hence $x*(-a)=-a-x*a+a$. From \cref{lem: twosided skew brace additive commutator A*A A*opA is zero} we find that $y,x*y$, which are elements of $A^2_\op$, commute with $x*a$, which is contained in $A^2$. Using this, in combination with our prior observation, we indeed find
\begin{align*}
    x*(-a+y+a)&=x*(-a)-a+x*(y+a)+a\\
    &=-a-x*a+x*y+y+x*a-y+a\\
    &=-a+x*y+a.\qedhere
\end{align*}
\end{proof}
\begin{theorem}
    Let $A$ be a two-sided skew brace, then $A$ is semiprime if and only if it is strongly semiprime.
\end{theorem}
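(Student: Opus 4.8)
The forward implication is immediate: a product $I*I$ of two copies of a nonzero ideal is a special case of a $*$-product of copies of $I$, so strongly semiprime trivially implies semiprime. The plan is thus to prove that a semiprime two-sided skew brace $A$ is strongly semiprime, and the engine will be a squaring argument, for which I first need products of ideals to stay inside the class of ideals. Combining \cref{lem: XY normal in circ then X*Y normal in circ} and \cref{lem: X norm in circ I left ideal X*I left ideal}, for any two ideals $I,J$ the set $I*J$ is always a left ideal that is normal in $(A,\circ)$; by \cref{lem: IJ in A^2cap A^2op then I*J ideal} it is moreover normal in $(A,+)$, hence a genuine ideal, as soon as $J\subseteq A_\op^2$. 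Since $A_\op^2$ is a left ideal, one also has $I*J\subseteq A_\op^2$ in that case. Consequently, a straightforward induction on the number of factors shows that if $J$ is \emph{any} ideal contained in $A_\op^2$, then every $*$-product of copies of $J$ (with arbitrary bracketing) is again an ideal contained in $A_\op^2$.

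For such $J$ I would then run the classical semiprime argument, handling the non-associativity of $*$ by a monotonicity trick. Define the balanced powers $J^{[1]}=J$ and $J^{[2^{k+1}]}=J^{[2^{k}]}*J^{[2^{k}]}$. These are ideals; they form a descending chain since $K*K\subseteq K$ for every ideal $K$; and semiprimeness, applied repeatedly to the nonzero ideals $J^{[2^{k}]}$, forces $J^{[2^{k}]}\neq\{0\}$ for all $k$. To conclude for an arbitrary bracketing $P$ of copies of $J$, I would dominate $P$ from below: using that $*$ is monotone in both arguments and that the $J^{[2^{k}]}$ decrease, an induction on the parse tree of $P$ gives $P\supseteq J^{[2^{k}]}$ for some $k$ (at a node $P=P_1*P_2$ with $P_1\supseteq J^{[2^{a}]}$ and $P_2\supseteq J^{[2^{b}]}$, take $k=\max(a,b)+1$, so that $P\supseteq J^{[2^{k-1}]}*J^{[2^{k-1}]}=J^{[2^{k}]}$). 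Hence $P\neq\{0\}$, and strong semiprimeness holds for every ideal contained in $A_\op^2$.

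The remaining, and principal, difficulty is to discharge the hypothesis $J\subseteq A_\op^2$. For a general ideal $I$ the square $I*I$ lies in $A^2$ but need not lie in $A_\op^2$: it is a left ideal normal in $(A,\circ)$ that is genuinely not forced to be additively normal (additive normality of $I*I$ is equivalent to its invariance under the maps $\lambda^\op_a$, which the brace axioms alone do not supply). Thus \cref{lem: IJ in A^2cap A^2op then I*J ideal} no longer applies and the squaring chain can leave the class of ideals, so semiprimeness can no longer be invoked at each step. Reducing an arbitrary ideal to the controlled situation above is the step I expect to cost the most effort.

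To carry out that reduction I would argue by a minimal counterexample. If some $*$-product of copies of a nonzero ideal vanishes, choose one with the fewest factors and write it as $P_1*P_2=\{0\}$ with $P_1,P_2\neq\{0\}$ (both left ideals normal in $(A,\circ)$ by the closure noted above). The goal is to extract from the relation $P_1*P_2=\{0\}$ a genuine nonzero ideal contained in $A^2\cap A_\op^2$ whose square vanishes, contradicting semiprimeness. Here I would exploit \cref{theorem: A two-sided additive group intersection A^2 Aop^2 abelian} and \cref{lem: twosided skew brace additive commutator A*A A*opA is zero}, so that on $A^2\cap A_\op^2$ the operation $*$ is associative by \cref{lem: associativity * sets}, together with the distributivity identities of \cref{lem: distributivity skew}; a useful guiding fact on the almost-trivial side is that, since $*$ there is a commutator operation, a nonzero soluble normal subgroup always contains a nonzero abelian one, which is exactly why the two notions should merge. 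Converting this into a clean argument — in particular manufacturing an honest two-sided ideal with zero square out of the merely $(A,\circ)$-normal left ideals that arise — is the point I anticipate will require the most care.
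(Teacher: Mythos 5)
Your first half is correct and coincides with the paper's machinery: for an ideal $J\subseteq A_\op^2$, combining \cref{lem: XY normal in circ then X*Y normal in circ}, \cref{lem: X norm in circ I left ideal X*I left ideal} and \cref{lem: IJ in A^2cap A^2op then I*J ideal} indeed makes every $*$-product of copies of $J$ an ideal contained in $A_\op^2$, and your balanced-power domination $P\supseteq J^{[2^k]}$ is exactly the paper's observation that an $n$-fold product of copies of $I$ contains $I_n$, where $I_{n+1}=I_n*I_n$. But the reduction you defer to the end is where the whole proof lives, and the route you sketch for it would fail. You propose to extract, from a minimal vanishing product $P_1*P_2=\{0\}$, a nonzero square-zero ideal contained in $A^2\cap A_\op^2$. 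This is impossible in general: for $A=\opTriv(G)$ one has $A_\op^2=\{0\}$, so there is no nonzero ideal inside $A^2\cap A_\op^2$ at all, yet the theorem is not vacuous there (it says that if no nonzero normal subgroup of $G$ is abelian, then no iterated commutator of a nonzero normal subgroup vanishes). Accordingly, \cref{theorem: A two-sided additive group intersection A^2 Aop^2 abelian} and the associativity on $Z(A,+)$ from \cref{lem: associativity * sets} play no role in the paper's proof of this statement.

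The missing idea is a simple dichotomy on the original nonzero ideal $I$. First, by your own domination argument pick $k$ with $I_k\neq\{0\}$ and $I_{k+1}=\{0\}$; if $I_2$ is an ideal one may replace $I$ by $I_2$ and iterate, so it suffices to prove that $I_2$ is an ideal, and by \cref{lem: XY normal in circ then X*Y normal in circ} and \cref{lem: X norm in circ I left ideal X*I left ideal} only additive normality is at stake. Now: if $I\cap A_\op^2\neq\{0\}$, replace $I$ by the nonzero ideal $I\cap A_\op^2$; by monotonicity some product of its copies still vanishes, and your first half finishes the job verbatim. If instead $I\cap A_\op^2=\{0\}$, then since ideals of $A$ and $A_\op$ coincide one gets $I*_\op I\subseteq I\cap A_\op^2=\{0\}$, i.e.\ $x\circ y=y+x$ for all $x,y\in I$; hence on $I$ the operation $*$ is the additive commutator, $x*y=-x+y+x-y$, so $I*I=[I,I]$ computed in $(A,+)$, which is normal in $(A,+)$ because $I$ is. Thus $I_2$ is an ideal (and $I_2\subseteq I$ again meets $A_\op^2$ trivially, so this branch is stable under iteration), giving a nonzero ideal $I_k$ with $I_k*I_k=\{0\}$ and the desired contradiction with semiprimeness. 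Your ``guiding fact'' about soluble normal subgroups is the correct group-theoretic shadow of this case, but the brace-level mechanism is the collapse of $\circ$ to the opposite addition on $I$, not an extraction into $A^2\cap A_\op^2$.
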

\begin{proof}
It suffices to show the implication from left to right, we do so by contraposition. Assume that $A$ contains a non-zero ideal $I$ such that there exists a $*$-product of $n$ copies of $I$ which is zero. From \cite[Lemma 6.11]{KSV21} it follows that there exists some $n\geq 2$ such that $I_n=\{0\}$ where $I_1=I$ and $I_{k+1}=I_k*I_k$. 

Now if $I\cap A_\op^2=\{0\}$, then \cref{lem: IJ is ideal under conditions} implies that all $I_k$ are ideals, so in particular there exists some $k$ such that the ideal $I_k$ is non-zero and $I_k*I_k=\{0\}$. It follows that $A$ is not semiprime.

Next, assume that $I'=I\cap A_\op^2$ is non-trivial. Then $I'_n\subseteq I_n=\{0\}$, but as all the $I'_k$ are ideals by \cref{lem: IJ is ideal under conditions} we once again conclude that $A$ is not semiprime.
\end{proof}
\begin{theorem}
    Let $A$ be a two-sided skew brace, then $A$ is prime if and only if it is strongly prime.
\end{theorem}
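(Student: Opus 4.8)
The plan is to leverage the semiprime case just established. One direction is immediate: a strongly prime skew brace is prime, since being prime only concerns $*$-products of two ideals. For the converse, assume $A$ is prime. Taking $I=J$ in the definition shows that $A$ is in particular semiprime, so by the theorem just proved $A$ is strongly semiprime. The strategy is then to show that any hypothetical vanishing $*$-product of finitely many (possibly distinct) non-zero ideals can be dominated by a vanishing $*$-product of copies of a \emph{single} non-zero ideal, which contradicts strong semiprimeness. This reduces the multi-ideal statement to the single-ideal one that we already control.

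The two ingredients I would isolate first are the following. First, for any ideals $I$ and $J$ one has $I*J\subseteq I\cap J$. I would prove this by passing to the quotient skew braces $A/I$ and $A/J$: the image of $i*j$ in $A/I$ equals $0*\overline{j}=\lambda_0(\overline{j})-\overline{j}=0$, and its image in $A/J$ equals $\overline{i}*0=\lambda_{\overline{i}}(0)-0=0$, using $\lambda_0=\id$ and $\lambda_a(0)=0$; hence $i*j\in I\cap J$. Combined with primeness this forces the intersection of any two non-zero ideals to be non-zero, since $I\cap J\supseteq I*J\neq\{0\}$. As an intersection of ideals is again an ideal, a short induction then shows that the intersection $J:=\bigcap_{k=1}^{n} I_k$ of any finite family of non-zero ideals is itself a non-zero ideal. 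Second, the operation $*$ is monotone in each argument, so replacing every factor of a bracketed $*$-product by a subset contained in it yields a $*$-product contained in the original one; this passes through arbitrary bracketings by induction on the bracketing tree.

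With these in hand the argument closes quickly. Suppose $A$ is prime but not strongly prime, so that some bracketed $*$-product of non-zero ideals $I_1,\dots,I_n$ equals $\{0\}$. Put $J=\bigcap_{k=1}^{n} I_k$, a non-zero ideal by the first ingredient. Since $J\subseteq I_k$ for every $k$, monotonicity shows that the $*$-product of $n$ copies of $J$ taken with the \emph{same} bracketing is contained in the original product, hence is $\{0\}$; this is a vanishing $*$-product of copies of the non-zero ideal $J$, contradicting strong semiprimeness. I expect the main obstacle to be the bookkeeping around the first ingredient: one must verify that $I*J\subseteq I\cap J$ genuinely holds for \emph{both} factors and that finite intersections of non-zero ideals stay non-zero. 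This is precisely the point where primeness (rather than mere semiprimeness) enters, and it is exactly what upgrades the single-ideal strongly semiprime statement to the multi-ideal strongly prime one.
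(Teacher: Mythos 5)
Your proof is correct, and it takes a genuinely different route from the paper. The paper argues directly at the level of ideals: it first observes that if some nonzero ideal $I$ satisfies $I\cap A^2_\op=\{0\}$ then $I*A^2_\op=\{0\}$, so primeness forces either $A^2_\op=\{0\}$ (the almost trivial case, where $*$-products are group commutators of normal subgroups, hence again ideals) or every nonzero ideal to meet $A^2_\op$ nontrivially; in the latter case it replaces each factor by its (nonzero) intersection with $A^2_\op$ and uses \cref{lem: XY normal in circ then X*Y normal in circ}, \cref{lem: X norm in circ I left ideal X*I left ideal} and \cref{lem: IJ in A^2cap A^2op then I*J ideal} to show that the $*$-product of two ideals contained in $A^2_\op$ is again an ideal, then descends through the bracketing to find a vanishing product of two nonzero ideals, contradicting primeness. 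You instead treat the semiprime theorem as a black box and reduce the multi-ideal statement to the single-ideal one: $I*J\subseteq I\cap J$ (your quotient argument for this is sound, since skew brace homomorphisms respect $*$ and $0*b=a*0=0$), so primeness makes finite intersections of nonzero ideals nonzero, and monotonicity of $*$ in each argument lets you replace all factors by $J=\bigcap_k I_k$ with the same bracketing, contradicting strong semiprimeness. Each approach has its merits: the paper's proof is self-contained and exhibits the products themselves as ideals inside $A^2_\op$, while yours is shorter, bypasses the case analysis and the three lemmas entirely, and---notably---uses two-sidedness only through the cited semiprime theorem: your three ingredients hold in \emph{any} skew brace, so your argument actually shows that for arbitrary skew braces the open question of whether prime implies strongly prime reduces to the question of whether semiprime implies strongly semiprime, a stronger conclusion than the paper's. (A cosmetic point only: your use of $\overline{j}$ for images in quotients clashes with the paper's notation for inverses in $(A,\circ)$, but this does not affect correctness.)
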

\begin{proof}
We can restrict to proving the implication from left to right. Assume that $A$ is prime.
If there exists a non-zero ideal $I$ such that $I\cap A^2_\op=\{0\}$, then $I*A^2_\op\subseteq I\cap A^2_\op=\{0\}$. So either $A^2_\op=\{0\}$ or all ideals intersect $A^2_\op$ non-trivially. Assume that there exists a $*$-product $P$ of non-zero ideals of $A$ which is zero. 

If $A^2_\op=\{0\}$, so $A$ is almost trivial, then a $*$-product of two ideals is once again an ideal; this is easily seen directly or also follows from \cref{lem: IJ is ideal under conditions}. In particular, at some point in $P$ the $*$-product of two non-zero ideals gives zero and therefore $A$ is not prime.

In the case that all ideals intersect $A^2_\op$ non-trivially, we might replace all the ideals appearing in $P$ by their intersection with $A_\op^2$ to obtain a new product $P'$ which is also zero. But from \cref{lem: IJ is ideal under conditions} we find that every $*$-product in $P'$ gives an ideal and therefore we can find a $*$-product of two non-zero ideals in $P'$ which is zero.
\end{proof}
\section{Acknowledgments}
The author was supported by Fonds Wetenschappelijk Onderzoek -- Vlaanderen, via grant 1160522N
\bibliographystyle{amsalpha}
\bibliography{bib}
\end{document}